\documentclass[reqno,12pt,letterpaper]{amsart}
\usepackage[proof]{jwmacros}
\usepackage{bbm}

\newcommand{\bv}{\mathbf{v}}
\newcommand{\scd}{q}
\newcommand{\bscd}{\textbf\scd}

\newtheorem{theorem}{Theorem}[section]

\newtheorem*{theorema}{Theorem A}
\newtheorem*{theoremb}{Theorem B}

\newtheorem{non-theorem}[theorem]{Non-Theorem}

\theoremstyle{remark}
\newtheorem{remark}[theorem]{Remark}

\renewcommand{\epsilon}{\varepsilon}

\newcommand{\ep}{\epsilon}

\newcommand{\abs}[1]{{\lvert{#1}\rvert}}

\newcommand{\pa}{\partial}

\newcommand{\vl}{ \mathbf{v}_{\mathrm{L}} }
\newcommand{\vr}{ \mathbf{v}_{\mathrm{R}} }
\newcommand{\jl}{ \mathcal{J}_{\mathrm{L}} }
\newcommand{\jr}{ \mathcal{J}_{\mathrm{R}} }

\newcommand{\Lap}{\Delta}

\begin{document}

\title[subcritical channels]{Internal waves in a 2D subcritical channel}

\author{Zhenhao Li}
\email{zhenhao@mit.edu}
\address{Department of Mathematics, Massachusetts Institute of Technology, Cambridge, MA 02139}
\author{Jian Wang}
\email{wangjian@ihes.fr}
\address{Institut des Hautes {\'E}tudes Scientifiques, Bures-sur-Yvette, France, 91893}
\author{Jared Wunsch}
\email{jwunsch@math.northwestern.edu}
\address{Department of Mathematics, Northwestern University, Evanston, IL 60208}

\begin{abstract}
We analyze the scattering of linear internal waves in a two dimensional channel with subcritical bottom topography. We construct the scattering matrix for the internal wave problem in a channel with straight ends, mapping incoming data to outgoing data; this operator turns out to differ by a smoothing operator from the pullback by the ``bounce map'' for boundary data obtained by ray-tracing.  As a consequence we obtain unique solvability of the inhomogeneous stationary scattering problem subject to an appropriate outgoing radiation condition.
\end{abstract}

\maketitle

%\cite{MaCaPe:14, MaCaPe:16, MuLi:00a, MuLi:00b, BuHo:11, Ru:88}

\begin{figure}
    \centering
    \includegraphics[width=0.8\textwidth]{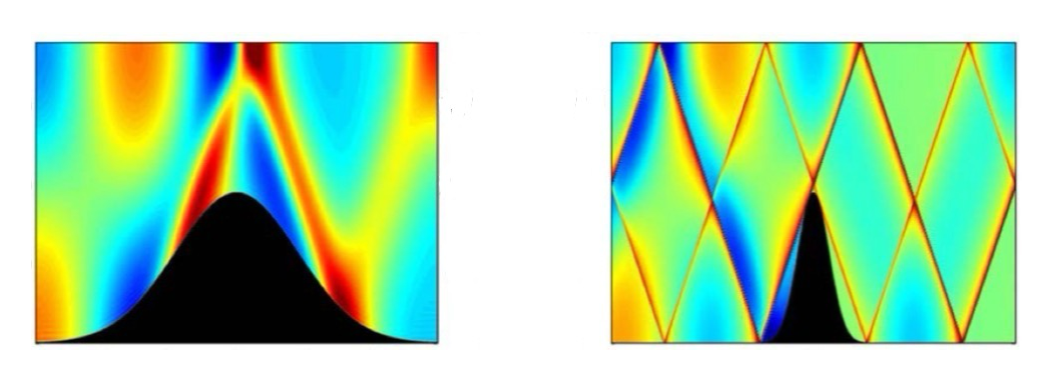}
    \caption{Scattering of a low-frequency incoming wave (traveling from left to right in both figures) by smooth bottom bumps (black). Colors represent the velocity of the internal waves. Left: the topography is subcritical (see Definition~\ref{def:subcritical} below). Right: the topography is supercritical. Figure from \cite{MaCaPe:14} (reproduced with permission).}
    \label{fig:topo}
\end{figure}

\section{Introduction}\label{s-intro}

Linear internal waves with forcing in a 2D domain $\Omega$ are described by the following Poincar\'e equation with a Dirichlet boundary condition:
\begin{equation}\label{poincare}(\partial_t^2\Delta+\partial_{x_2}^2)\psi(t,x)=F(t,x), \ \psi|_{\partial\Omega}=0, \ \psi|_{t=0}=\partial_t \psi|_{t=0}=0.
\end{equation}
Here $F(t,x)$ is a forcing term and $\psi$ is the stream function of the fluid such that the velocity of the fluid is given by $(\partial_{x_2} \psi, -\partial_{x_1}\psi )$. For the derivation of \eqref{poincare}, we refer to \cite{sobolev64, Ra:73, mbsl97, brouzet16, djov18, cdvsr20}. The evolution of internal waves in a bounded domain with periodic forcing has been the focus of considerable recent interest \cite{DyWaZw:21, Li:23, CoLi:24, Li:24}.

Here we study 2D internal waves in an \emph{unbounded} channel with flat horizontal ends. That is, we consider 
\begin{equation*}
    \Omega:=\{x\in \mathbb R^2 \mid  G(x_1)<x_2<0 \} \text{ with } G\in C^{\infty}(\mathbb R;\RR), \ G<0, \  G|_{\mathbb R\setminus [-R_0, R_0]}=-\pi
\end{equation*}
for some $R_0>0$.

Formal Fourier--Laplace transform of \eqref{poincare} in time, with zero forcing, yields the stationary equation
$$
(-\lambda^2\Lap + \partial_{x_2}^2) \hat \psi(\lambda, x) =0, \ \hat\psi(\lambda,x)|_{\pa \Omega}=0,
$$
which we rewrite as
\begin{equation}\label{stationary}
    P(\lambda)u_{\lambda}=0, \ u_{\lambda}|_{\partial\Omega}=0
\end{equation}
with
\begin{equation*}
     P(\lambda):=-\lambda^2\partial_{x_1}^2+(1-\lambda^2)\partial_{x_2}^2,
\end{equation*}
and where $u_\lambda(x) =\hat\psi(\lambda,x).$
Note that for $\lambda \in (0,1)$ this is a \emph{hyperbolic} equation, hence the usual results of stationary scattering theory, usually formulated in the context of elliptic operators with a spectral parameter (e.g., the Helmholtz equation), do not apply.  In this note, we nonetheless pursue the basic constructions of stationary scattering theory in the hyperbolic context.

In the flat ends of the channel, it is easy to solve \eqref{stationary} exactly, and we show below
(Definition~\ref{def:io}) that we may split the solution into \emph{incoming} and \emph{outgoing} parts in that region; this decomposition is phrased in terms of the Neumann data of the solution along the flat upper boundary component. 
Between the flat ends, the boundary of course varies and we have no closed-form solution.  If we view \eqref{stationary} as a wave equation, this corresponds to a boundary of a one-dimensional region that evolves in time.  An important hypothesis in what follows is that the boundary should not move faster than the speed of propagation; such a boundary is called \emph{subcritical} (see Definition~\ref{def:subcritical} below).

%\jared{The letter $g$ was problematic for scattering data.  I don't love $\scd$ but it's now a macro, so feel free to try something else. Please be alert for places where I've screwed up with query-replace and $\scd$ should be $g$.}
Our main result, stated more precisely below as Theorem~\ref{t-matrix}, is as follows:
\begin{theorema}
Suppose $\Omega$ is subcritical for a given $\lambda\in (0,1)$.
Incoming scattering data $\mathbf \scd^i \in L^2$ determines a unique solution to \eqref{stationary}, with outgoing scattering data $$\mathbf \scd^o=: \mathbf S(\lambda) \mathbf \scd^i.$$ The operator $\mathbf S(\lambda)$ (the scattering matrix) is given modulo a smoothing operator by pullback along the multiple bounce map $b$ given by ray-tracing along characteristics.
\end{theorema}
We moreover show below that $\mathbf S(\lambda)$ is unitary on an appropriately chosen Sobolev space (see Theorem~\ref{t-matrix} for details).

An important consequence of this theorem is the following corollary concerning the associated inhomogeneous problem; a complete statement of this result is below in Theorem~\ref{t-outgoing}.
\begin{theoremb}
Suppose $\Omega$ is subcritical for a given $\lambda\in (0,1)$.
Let $f(x) \in L^2_{\mathrm{comp}}(\Omega).$ There exists a unique \emph{outgoing} solution $u$ to the inhomogeneous equation 
$$
P(\lambda) u(x)=f(x).
$$
\end{theoremb}
The outgoing condition thus plays the role of the usual Sommerfeld radiation condition in conventional Euclidean scattering theory. 

We now describe in more detail the geometry of characteristics for the equation $P(\lambda)u=0$, as well as the outgoing condition.
For $\lambda\in (0,1)$, the characteristic lines of $P(\lambda)$ are level sets of $\ell_{\lambda}^{\pm}: \Omega\to \RR$ where
\begin{equation}\label{eq:char_lines}
    \ell_{\lambda}^{\pm}(x):=\pm \frac{ x_1 }{\lambda} +\frac{x_2}{\sqrt{1-\lambda^2}}.
\end{equation}
These characteristic lines have constant slopes $\pm c(\lambda)$ with
\begin{equation}\label{eq:c_def}
    c(\lambda):=\frac{\sqrt{1-\lambda^2}}{\lambda}.
\end{equation}

\begin{defi}\label{def:subcritical}
    We say a channel $\Omega$ is {\em subcritical} for time frequency $\lambda$ if $\max|G'|<c(\lambda)$; we say $\Omega$ is {\em supercritical} for $\lambda$ if $\max|G'|>c(\lambda)$.
\end{defi}
Subcriticality is an \emph{open} condition: if $\lambda \in (0, 1)$ is subcritical, then there exists an open interval $\mathcal I \subset (0, 1)$ containing $\lambda$ such that $\Omega$ is subcritical with respect to $\lambda'$ for all $\lambda' \in \mathcal I$. 

If $\Omega$ is subcritical for $\lambda$, then each characteristic line of $P(\lambda)$ intersects each of the upper domain boundary $$\partial\Omega_{\uparrow}:=\{(x_1,0) \mid x_1\in \RR\}$$ and the lower domain boundary $$\partial\Omega_{\downarrow}:=\{(x_1, G(x_1)) \mid x_1\in \RR\}$$ precisely once. 
Therefore, there exist unique involutions $\gamma^\pm_{\lambda}: \partial \Omega \to \partial \Omega$ that satisfy
\begin{equation}\label{e-gamma_def}
    \ell_{\lambda}^\pm(x) = \ell_{\lambda}^\pm(\gamma_{\lambda}^\pm(x)), \ \gamma_{\lambda}^\pm (\partial \Omega_\uparrow) = \partial \Omega_\downarrow.
\end{equation} 
Composing the two involutions, we define the single bounce chess billiard map 
\begin{equation}\label{chess1}
    b_{\lambda}:= \gamma_{\lambda}^- \circ \gamma_{\lambda}^+ : \partial \Omega_\uparrow \to \partial \Omega_\uparrow.
\end{equation}
See Figure~\ref{fig:bounce}, and \cite{NoTr:22} for an explanation of the terminology and its history. In the following we usually identify $\partial\Omega_{\uparrow}$ with $\RR$ through $(x_1,0)\mapsto x_1$. Then $b_{\lambda}$ can be regarded as an orientation preserving diffeomorphism on $\RR$. Let $M > R + 3\pi/c(\lambda)$. Then a direct computation shows that 
\[ b_{\lambda}(x_1) = x_1+\frac{2\pi}{c(\lambda)} \quad \text{when} \quad \ |x_1|\geq M. \]
Moreover, there exist open intervals $\mathcal J_{\mathrm L}, \mathcal J_{\mathrm R}\subset \partial\Omega_{\uparrow}$ such that 
\begin{equation}\label{eq:J_def}
    \mathcal J_{\mathrm L}\subset (-\infty, -M), \ \mathcal J_{\mathrm R}\subset (M,\infty), \ |\mathcal J_{\mathrm L}|=|\mathcal J_{\mathrm R}|=\frac{2\pi}{c(\lambda)}, \ \mathcal J_{\mathrm R}=b_\lambda^N(\mathcal J_{\mathrm L}), \ N\in \mathbb N.
\end{equation}
In the following we fix $\mathcal J_{\mathrm L}$, $\mathcal J_{\mathrm R}$ and call them {\em left fundamental interval} and {\em right fundamental interval}  respectively. Later we identify $\mathcal J_{\mathrm L}$, $\mathcal J_{\mathrm{R}}$ with the torus $\mathbb T_{\lambda}:=\RR/(\frac{2\pi}{c(\lambda)}\mathbb Z)$. We also denote 
\[ \mathbf b_{\lambda}:=b_{\lambda}^N: \partial\Omega_{\uparrow}\to \partial\Omega_{\uparrow} \]
where $N$ is the same as in~\eqref{eq:J_def}, and call it the {\em multi-bounce chess billiard} map. Clearly $\mathbf b_\lambda : \jl \to \jr$. 
\begin{figure}[t]
    \centering
    \includegraphics{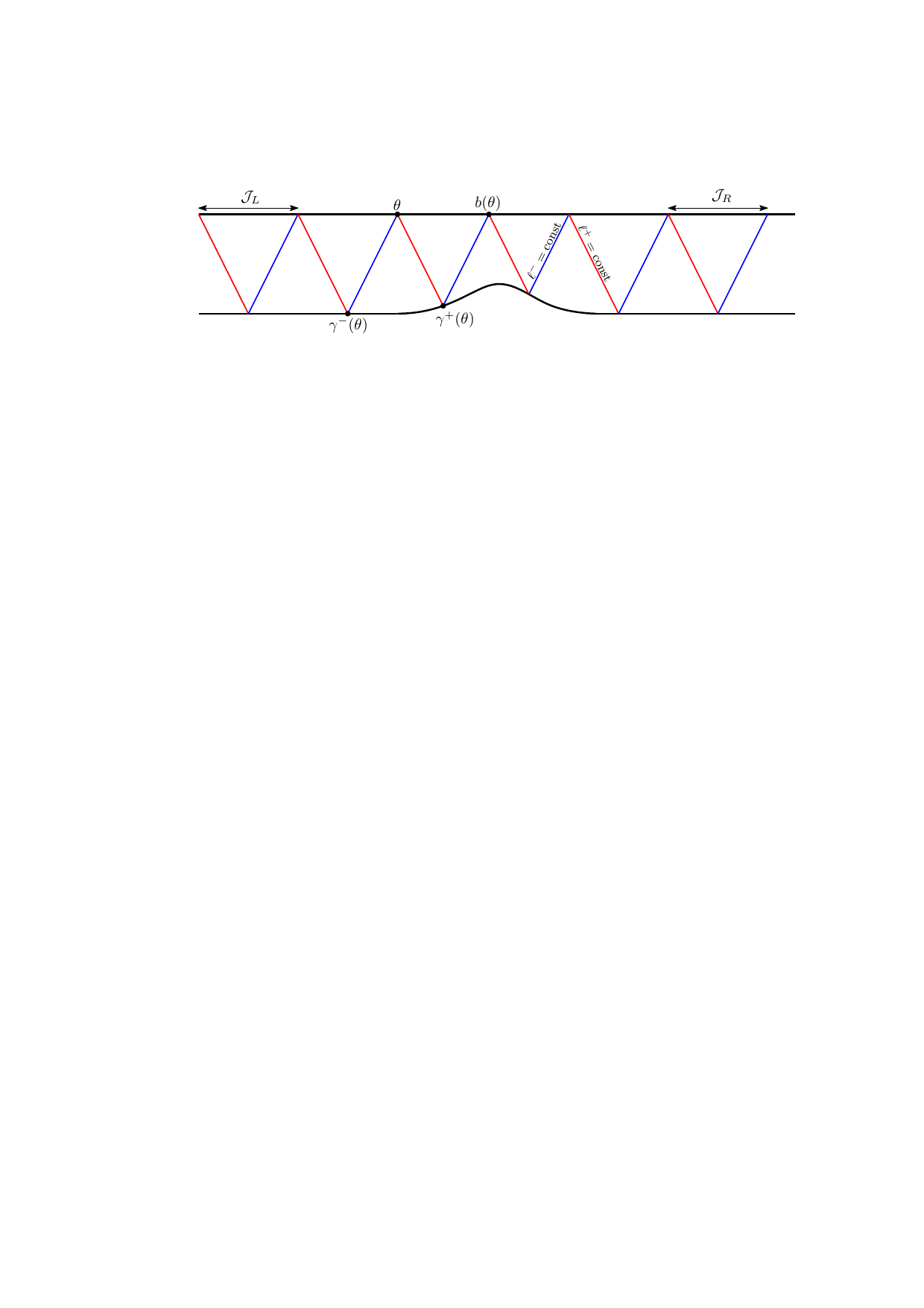}
    \caption{Diagram of $\Omega$. Level lines of $\ell^+_\lambda$ are in red and level lines of $\ell^-_\lambda$ are in blue. For a point $\theta \in \partial \Omega_\uparrow$, the location of $\gamma^\pm(\theta)$ and $b_\lambda(\theta)$ are indicated. A choice of fundamental intervals $\jl$ and $\jr$ is also labeled.}
    \label{fig:bounce}
\end{figure}

We consider solutions to the inhomogeneous equation
\begin{equation}\label{inhomogeneous}\begin{aligned}
    P(\lambda)u(x)&=f(x), \quad u|_{\partial\Omega}=0, \quad \lambda\in (0,1) \text{ subcritical},\\
    &f\in L^2_{\comp},\ \supp f\subset \overline{\Omega}\cap \{|x_1|\leq M\}.
\end{aligned}
\end{equation}
Here $L^2_{\comp}(\Omega)$ is the space of compactly supported $L^2$ functions on $\overline{\Omega}$. Note that we may always choose $M$ sufficiently large so that this is the same $M$ as in the definition of $\mathcal J_{\mathrm L}$ and $\mathcal J_{\mathrm R}$ in~\eqref{eq:J_def}. 

On the flat ends of the channel ($\abs{x_1}\gg 0$), $f$ vanishes, and solutions to \eqref{inhomogeneous} can be expanded as Fourier sine series in $x_2$, with solutions arising as superpositions of single vertical mode solutions
$$
u(x_1,x_2) = \sum_{k=1}^\infty \big( A_k^+ e^{ik c(\lambda)x_1 }+A_k^- e^{-ikc(\lambda)x_1 }\big)\sin (k x_2)
$$
with $c(\lambda)$ given by \eqref{eq:c_def}.

The bookkeeping of the incoming and outgoing parts of the solution is most easily phrased in terms of the 
Neumann data 
$$
\pa_{x_2} u\rvert_{x_2=0}= \sum_{k=1}^\infty  k A_k^+ e^{ik c(\lambda)x_1 }+ kA_k^- e^{-ikc(\lambda)x_1 }
$$
on $\mathcal J_{\mathrm L}$ and $\mathcal J_{\mathrm R},$ which we may regard as a periodic function of mean zero on the torus, or, if we prefer, as an exact one-form on the circle (after multiplication by $\mathrm d x_1$).  Let $\Pi^\pm$ be the projection map onto the positive ($+$) or negative ($-$) Fourier modes of such a function (so $\Id=\Pi^+ + \Pi^-$).
\begin{defi}\label{def:io}
    A solution $u$ to \eqref{inhomogeneous} is called {\em incoming} if 
\[ \Pi^-(\partial_{x_2}u|_{\mathcal J_{\mathrm L}}) = \Pi^+( \partial_{x_2}u|_{\mathcal J_{\mathrm R}} )=0. \]
and \emph{outgoing} if 
\[ \Pi^+(\partial_{x_2}u|_{\mathcal J_{\mathrm L}}) = \Pi^-( \partial_{x_2}u|_{\mathcal J_{\mathrm R}} )=0.\]
More generally, splitting
$$
\partial_{x_2}u|_{\mathcal J_{\mathrm{L}}}=\Pi^+ \scd^i + \Pi^-\scd^o, \ \partial_{x_2}u|_{\mathcal J_{\mathrm R}}=\Pi^- \scd^i+\Pi^+ \scd^o,
$$
we refer to $\scd^i$ resp.\ $\scd^o$ as the \emph{incoming} resp.\ \emph{outgoing} data of the solution.
\end{defi}

Our main result is that for given incoming data, there exists a unique solution to the homogeneous equation \eqref{stationary}, hence the outgoing data is uniquely specified.  The scattering matrix is the operator that maps the incoming data to the outgoing data.
More precisely, let $\mathring L^2(\mathbb T_{\lambda})$ consist of $L^2$ functions on the torus $\mathbb T_{\lambda}$ with zero mean value.
 We denote $\mathring{H}^{-\frac12}(\mathbb T_{\lambda}; T^*\mathbb T_{\lambda})$ the homogeneous Sobolev space of order $-\frac12$, consisting of one-forms of mean zero with norms defined by
 \[ \|\mathbf \scd\|_{\mathring{H}^{-\frac12}(\mathbb T_{\lambda})} = \sum_{k\in \mathbb Z}|k|^{-1}|\widehat{\mathbf \scd}(k)|^2, \ \widehat{\mathbf \scd}(k):=\frac{c(\lambda)}{2\pi}\int_{\mathbb  T_{\lambda}}e^{-ic(\lambda)k\theta}\mathbf \scd(\theta). \]
 (See \S\ref{sec:sobolev} for a brief discussion of the notation used for all the Sobolev spaces used in this paper.) The precise statement of our main theorem is then the following.

\begin{theorem}\label{t-matrix}
    Suppose $\Omega$ is subcritical for $\lambda\in (0,1)$. Then for any $  \scd^i\in \mathring L^2(\mathbb T_{\lambda})$, there exist unique $  \scd^o\in \mathring L^2(\mathbb T_{\lambda})$ and $u\in \dot H^1_{\mathrm{loc}}(\Omega)$ such that 
    \[ P(\lambda)u=0, \ \frac{-2}{c(\lambda)}\partial_{x_2}u|_{\mathcal J_{\mathrm{L}}}=\Pi^+  \scd^i + \Pi^-  \scd^o, \ \frac{-2}{c(\lambda)}\partial_{x_2}u|_{\mathcal J_{\mathrm R}}=\Pi^-  \scd^i+\Pi^+  \scd^o. \]
    The resulting map 
    \[ \mathbf S(\lambda): \mathring L^2(\mathbb T_{\lambda}; T^*\mathbb T_{\lambda}) \to \mathring L^2(\mathbb T_{\lambda}; T^*\mathbb T_{\lambda}), \quad   \scd^i \mathrm d x_1 \mapsto   \scd^o \mathrm d x_1,  \]
    is called the scattering matrix for $P(\lambda)$ in $\Omega$. Moreover, there exists a smoothing operator $R: \mathcal D'(\mathbb T_{\lambda}; T^*\mathbb T_{\lambda})\to C^{\infty}(\mathbb T_{\lambda}; T^*\mathbb T_{\lambda})$ such that 
    \[ \mathbf S(\lambda)=\Pi^+ \mathbf b_{\lambda}^* \Pi^+ + \Pi^-( \mathbf b_{\lambda}^{-1})^* \Pi^- + R. \]
    Furthermore, $\mathbf S(\lambda)$ has the improved mapping property
    \begin{equation}\label{eq:S_map_prop}
        \mathbf S(\lambda): \mathring H^s(\mathbb T_{\lambda}; T^*\mathbb T_{\lambda}) \to \mathring H^s(\mathbb T_{\lambda}; T^*\mathbb T_{\lambda})
    \end{equation}
    {for all $s\in\RR$}
    and is in fact a unitary operator on $\mathring{H}^{-\frac12}(\mathbb T_{\lambda}; T^*\mathbb T_{\lambda})$.
\end{theorem}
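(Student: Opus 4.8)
The plan is to reduce the homogeneous problem \eqref{homogeneous} to the boundary, where the Dirichlet condition turns the method of characteristics into the chess billiard dynamics, and then to read off $\mathbf S(\lambda)$ as, essentially, pullback by $\mathbf b_\lambda$. First I would solve $P(\lambda)u=0$ by d'Alembert, writing $u=F(\ell_\lambda^+)+H(\ell_\lambda^-)$ for one-variable functions $F,H$ (using \eqref{eq:char_lines}). The condition $u|_{\partial\Omega_\uparrow}=0$ forces $H(s)=-F(-s)$, so the top Neumann trace reduces to $\partial_{x_2}u(x_1,0)=\tfrac{2}{\sqrt{1-\lambda^2}}F'(x_1/\lambda)$; in particular $u\in\dot H^1_{\mathrm{loc}}$ corresponds exactly to the Neumann data lying in $\mathring L^2_{\mathrm{loc}}$. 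Imposing $u|_{\partial\Omega_\downarrow}=0$ then yields the functional equation $F(\ell_\lambda^+(x_1,G))=F(-\ell_\lambda^-(x_1,G))$, which upon differentiation says precisely that the Neumann one-form $\mathbf g\,dx_1:=(\partial_{x_2}u)\,dx_1$ is invariant under the single-bounce map, $b_\lambda^*(\mathbf g\,dx_1)=\mathbf g\,dx_1$. On the flat ends I expand $u=\sum_k a_k(x_1)\sin(kx_2)$ with $a_k=\alpha_k e^{ic(\lambda)kx_1}+\beta_k e^{-ic(\lambda)kx_1}$, so that $\Pi^\pm$ selects the two families $\{\alpha_k\},\{\beta_k\}$, which are exactly the incoming and outgoing waves on each end.

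Existence and uniqueness of $(\mathbf g^o,u)$ follow from this invariance. A $b_\lambda$-invariant Neumann one-form is determined by its restriction to $\jl\cong\mathbb{T}_\lambda$, and its restriction to $\jr=\mathbf b_\lambda(\jl)$ is then $(\mathbf b_\lambda^{-1})^*$ of its restriction to $\jl$. Prescribing $\mathbf g^i$ amounts to fixing $\Pi^+$ of the $\jl$-data and $\Pi^-$ of the $\jr$-data; solving for the remaining $\Pi^-$ of the $\jl$-data (equivalently for $\mathbf g^o$) is a linear equation on $\mathbb{T}_\lambda$ whose principal part is the invertible pullback $\Pi^-(\mathbf b_\lambda^{-1})^*\Pi^-$ and whose off-diagonal part is smoothing, hence Fredholm. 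Injectivity will come from the conservation law of the last step, yielding unique solvability.

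For the stated formula I combine the invariance $\mathbf g|_{\jr}=(\mathbf b_\lambda^{-1})^*\mathbf g|_{\jl}$ with the two decompositions of the traces and the key microlocal fact that $\mathbf b_\lambda$ descends to an orientation-preserving diffeomorphism of $\mathbb{T}_\lambda$ (since $b_\lambda$ is a translation off $[-M,M]$). Pullback by such a map is a Fourier integral operator whose canonical relation preserves the sign of the frequency, so the cross terms $\Pi^-\mathbf b_\lambda^*\Pi^+$ and $\Pi^+(\mathbf b_\lambda^{-1})^*\Pi^-$ are smoothing. Applying $\Pi^\pm$ to the invariance relation and discarding these cross terms gives $\mathbf g^o=\Pi^+\mathbf b_\lambda^*\Pi^+\mathbf g^i+\Pi^-(\mathbf b_\lambda^{-1})^*\Pi^-\mathbf g^i+R\mathbf g^i$ with $R$ smoothing, which is the claimed identity and simultaneously supplies the compactness used above. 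The mapping property \eqref{eq:S_map_prop} is then immediate, since pullback by a smooth diffeomorphism and the projections $\Pi^\pm$ are bounded on every $\mathring H^s(\mathbb{T}_\lambda)$ and $R$ is smoothing.

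Unitarity on $\mathring H^{1/2}$ I would obtain from a conservation law rather than from the symbol. The current $J=(-\lambda^2\Im(\bar u\,\partial_{x_1}u),\,(1-\lambda^2)\Im(\bar u\,\partial_{x_2}u))$ is divergence free for solutions and vanishes on $\partial\Omega_\uparrow\cup\partial\Omega_\downarrow$ because $u|_{\partial\Omega}=0$, so the flux through a left flat cross-section equals that through a right one. Expanding this flux in the modes $\alpha_k,\beta_k$ and re-expressing it through the Neumann data shows that total incoming flux equals total outgoing flux, i.e.\ that $\mathbf S(\lambda)$ preserves the relevant quadratic form; matching the normalization to the Fourier weight in $\|\cdot\|_{\mathring H^{1/2}}$ then makes $\mathbf S(\lambda)$ an isometry, and surjectivity (by rerunning the construction with the roles of incoming and outgoing data exchanged, a time-reversal symmetry) upgrades it to a unitary. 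I expect the main obstacle to be exactly this step: pinning down the conserved flux for the self-adjoint realization of $P$ and verifying that, in the chosen Fourier normalization, it coincides with the $\mathring H^{1/2}$ inner product rather than a neighboring Sobolev norm. A secondary technical point is making the one-form invariance and the smoothing estimate for $R$ rigorous at the low regularity ($u\in\dot H^1_{\mathrm{loc}}$, $\mathbf g\in\mathring L^2$) at which the data are given.
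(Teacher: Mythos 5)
Your overall strategy mirrors the paper's: reduce to the boundary, where the Dirichlet conditions turn the d'Alembert decomposition into invariance of the Neumann one-form under the chess billiard map; observe that the cross terms $\Pi^{\mp}\mathbf b^{\pm*}\Pi^{\pm}$ are smoothing because $\mathbf b_\lambda$ is an orientation-preserving circle diffeomorphism; and use a conservation law both for uniqueness and for unitarity on $\mathring H^{\frac12}$. Your divergence-free current $J$ is the physical-space avatar of the paper's quantum flux $\mathbf F(v)=\frac{1}{i}\int_{\mathbb S^1}\bar v\,\mathrm dv$ evaluated on the boundary data, and the two conservation arguments are equivalent; the structure formula for $\mathbf S(\lambda)$ and the mapping property \eqref{eq:S_map_prop} then follow exactly as you describe.

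The gap is in the solvability step. After decoupling, your equation for the unknown outgoing datum reads $\Pi^-(\mathbf b_\lambda^{-1})^*\Pi^-(\Pi^-\vl) = \Pi^-\mathbf g^i - \Pi^-(\mathbf b_\lambda^{-1})^*\Pi^+\Pi^+\mathbf g^i$ (with a mirror equation for $\Pi^+\vr$); the smoothing terms act only on the \emph{known} data, so the operator you must invert is exactly the compression $\Pi^-(\mathbf b_\lambda^{-1})^*\Pi^-$ on $\Pi^-L^2$. You assert this operator is invertible and then invoke ``Fredholm plus injectivity from the conservation law.'' But this compression is a Toeplitz-type operator, not of the form identity plus smoothing, so Fredholmness itself needs a parametrix argument, and, more importantly, Fredholm together with injectivity yields invertibility only when the index is zero --- a fact you never establish (a homotopy through $\mathrm{Diff}_+(\mathbb S^1)$ does not immediately work, since pullback is not norm-continuous in the diffeomorphism). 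The paper sidesteps this issue by a different linear-algebraic arrangement: it uses $\vl=\mathbf b^*\vr$ to express the left outgoing datum and $\vr=(\mathbf b^{-1})^*\vl$ to express the right outgoing datum, producing a $2\times 2$ system whose diagonal is the identity and whose off-diagonal entries are smoothing, hence automatically Fredholm of index zero; injectivity (the lemma that $\Pi^-\varphi^*\Pi^- v=0$ forces $\Pi^-v=0$, proved by the flux) then gives invertibility. If you wish to keep your decoupled formulation, you can close the gap by proving that the adjoint of $\Pi^-(\mathbf b_\lambda^{-1})^*\Pi^-$ is also injective: the $L^2$-adjoint of pullback on one-forms is composition with the inverse diffeomorphism acting on functions, and the same flux argument applies verbatim to functions, giving index zero and hence invertibility.
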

(The normalizing constant $-2/c(\lambda)$ is explained below in Remark~\ref{rem:neumann}.)

Using the scattering matrix constructed in Theorem \ref{t-matrix}, one can find purely {\em outgoing} solutions to the inhomogeneous stationary problem \eqref{inhomogeneous}.

\begin{theorem}\label{t-outgoing}
    Suppose $\Omega$ is subcritical for   $\lambda\in (0,1)$. Then there exists a map
    \[ \mathcal{R}(\lambda): L^2_{\mathrm{comp}}(\overline{\Omega})\to \dot H^1_{\mathrm{loc}}(\Omega) \]
    such that for any $f\in L^2_{\mathrm{comp}}(\overline{\Omega})$, the function $u:=\mathcal{R}(\lambda)f$ is the unique solution to \eqref{inhomogeneous} satisfying 
    \[ \Pi^+(\partial_{x_2}u|_{\mathcal J_{\mathrm L}})=\Pi^-( \partial_{x_2} u|_{\mathcal J_{\mathrm R}} )=0. \]
 \end{theorem}

\subsection{Relation to the oceanographic literature}
The oceanographic literature contains some explorations of the scattering problem as discussed here (it is of considerable importance, e.g., in the study of mixing in the ocean \cite{MuLi:00b}).  Longuet-Higgins \cite{Lo:69}, for example, considers the approximation to the scattering given by ray-tracing, as motivated by WKB solutions.  This was clearly understood as a high-frequency approximation: M\"uller--Liu \cite[\S 5c]{MuLi:00a} note that ``One expects reflection theory to do the worst for low incident modenumbers. This is indeed the case.''  Indeed, Baines \cite{Ba:71} performed a more refined analysis of plane-wave scattering that involved a Fredholm integral operator correcting the ray tracing approximation, which he too noted is inaccurate, especially at low wavenumbers.  Baines worked in an ocean with no surface, however, rather than the finite channel under consideration here.  Our approach is morally similar, but involves rigorous discussion of uniqueness of outgoing solutions.  Our results on the scattering matrix quantitatively justify the assertion that the reflection theory approximates the scattering matrix, by showing that the error in this approximation is rapidly decaying in the wavenumber parameter (this is the frequency-domain manifestation of the fact that the remainder term $R$ is smoothing).

We remark that a previous version of this manuscript contained an account of the limiting absorption principle, concerning the convergence of $\lim_{\ep \downarrow 0} P(\lambda-i\ep)^{-1}$ to the outgoing resolvent described above, together with consequences for the long-time asymptotics in the time-dependent problem with periodic forcing.  We have, however, discovered an error in this work, and anticipate returning to the question of the limiting absorption principle in a future work, employing a parametrix based on the bounce map similar to that in \cite{DyWaZw:21}.

\subsection{Some Sobolev spaces}\label{sec:sobolev}
Before moving on, we fix the notation for various Sobolev spaces on manifolds with boundary. If $F \subset \mathcal D'(\RR^2)$ is a closed linear subspace of Schwartz distributions, we denote by $\dot F(\Omega) \subset F$ the subspace of $F$ supported on $\overline \Omega$, and $\bar F(\Omega) = F/\dot F(\RR^2 \setminus \Omega)$ the space of extendable distributions on $\Omega$. For instance, $\dot H^1(\Omega)$ denotes the set of functions in $H^1(\RR^2)$ whose support lies in $\overline \Omega$. In particular, $\dot H^1(\Omega) \simeq H^1_0(\Omega)$, where $H^1_0(\Omega)$ denotes the usual space of trace-free $H^1$ functions on $\Omega$. We also remark that $\dot L^2(\Omega) = \bar L^2(\Omega) = L^2(\Omega)$. For more details, see \cite[Appendix B]{Ho:85}. We will also use the subscripts $\loc$ or $\comp$ to denote local and compactly supported Sobolev spaces respectively. Finally, we denote by $\mathring H^s(\mathbb T; T^* \mathbb T)$ to be the subset of distributional one-forms $\mathbf v \in H^s(\mathbb T; T^* \mathbb T)$ such that $\int \mathbf v = 0$. 

\vspace{5pt}
\noindent
{\bf Acknowledgments.}
The authors would like to thank Semyon Dyatlov and Carl Wunsch for helpful discussions.
J.~Wunsch acknowledges partial support from NSF grant DMS--2054424 and from Simons Foundation Grant MPS-TSM-00007464.
Z.~Li acknowledges partial support from Semyon Dyatlov's NSF grant DMS--2400090.

\section{Characteristics and Cauchy data}\label{s-stationary}

Let us start by analyzing \eqref{inhomogeneous} in the characteristic coordinates of \eqref{eq:char_lines}, i.e., in coordinates $$y_{\pm}:=\ell_{\lambda}^{\pm}(x)=\pm \frac{ x_1 }{\lambda} +\frac{x_2}{\sqrt{1-\lambda^2}}.$$ In these coordinates,
\[ P(\lambda) = \tfrac14 \partial_{y_+}\partial_{y_-}. \]
The upper boundary is given by
\[ \partial\Omega_{\uparrow}=\{ (y_+, y_-) \mid y_++y_-=0 \} \]
and we parametrize $\partial\Omega_{\uparrow}$ by 
\begin{equation*} \label{eq:yparam}
\mathbf y: \RR \to \partial\Omega_{\uparrow}, \ \theta\mapsto \left( \frac{\theta}{\sqrt{1-\lambda^2}}, -\frac{\theta}{\sqrt{1-\lambda^2}} \right). 
\end{equation*}
The lower boundary is given by
$$
\partial\Omega_{\downarrow}=\{ (y_+, y_-) \mid K(y_1,y_2)=0\}
$$
where
$$
K(y_1,y_2):= \frac{\sqrt{1-\lambda^2}}{2} (y_++y_-)-G(\lambda(y_+-y_-)/2);
$$
hence
$$
\Omega =\{K \geq 0\} \cap \{y_++y_-<0\}.
$$
See Figure~\ref{fig:ypm} for a diagram of the domain in $y_\pm$ coordinates. Subcriticality implies that there exists $\ep_G>0$ such that
$$
\pa_{y_1}K\leq -\ep_G, \quad \pa_{y_2} K\leq -\ep_G
$$
hence
\begin{equation}\label{lowerboundary}
(y_1,y_2) \in \pa \Omega_{\downarrow},\ y_1'< y_1,\ y_2'<y_2 \Longrightarrow (y_1',y_2') \notin \Omega,
\end{equation}
and more generally there exists $R_G>0$ such that
\begin{equation}\label{lowerboundary2}
(y_1,y_2) \in \Omega \Longrightarrow \{(y_1',y_2')\mid y_i'\leq y_i\}\cap \Omega\subset B^2((y_1,y_2), R_G).
\end{equation}

Returning to \eqref{chess1}, we see that under this parametrization there exists $M>0$ depending on the topography $G$ such that 
\[ b_{\lambda}(\theta) = \theta+2\pi \text{ for all } |\theta|\geq M \]
We use $\mathcal J_{\pm}$ to denote the pre-image of the left/right fundamental intervals defined in \S \ref{s-intro} when there is no ambiguity.

\subsection{Reduction to the boundary}\label{sec:compact}
In $(y_+, y_-)$ coordinates, \eqref{inhomogeneous} becomes
\[ \partial_{y_+}\partial_{y_-}u = 4f, \ u|_{\partial\Omega}=0. \]
Let
\begin{equation}\label{e-u0}
U_0(y_+, y_-):= 4\int_{-\infty}^{y_+}\int_{-\infty}^{y_-} f(s_+, s_-) \mathrm d s_+ \mathrm d s_-. 
\end{equation}
\begin{figure}
    \centering
    \includegraphics[width=0.5\linewidth]{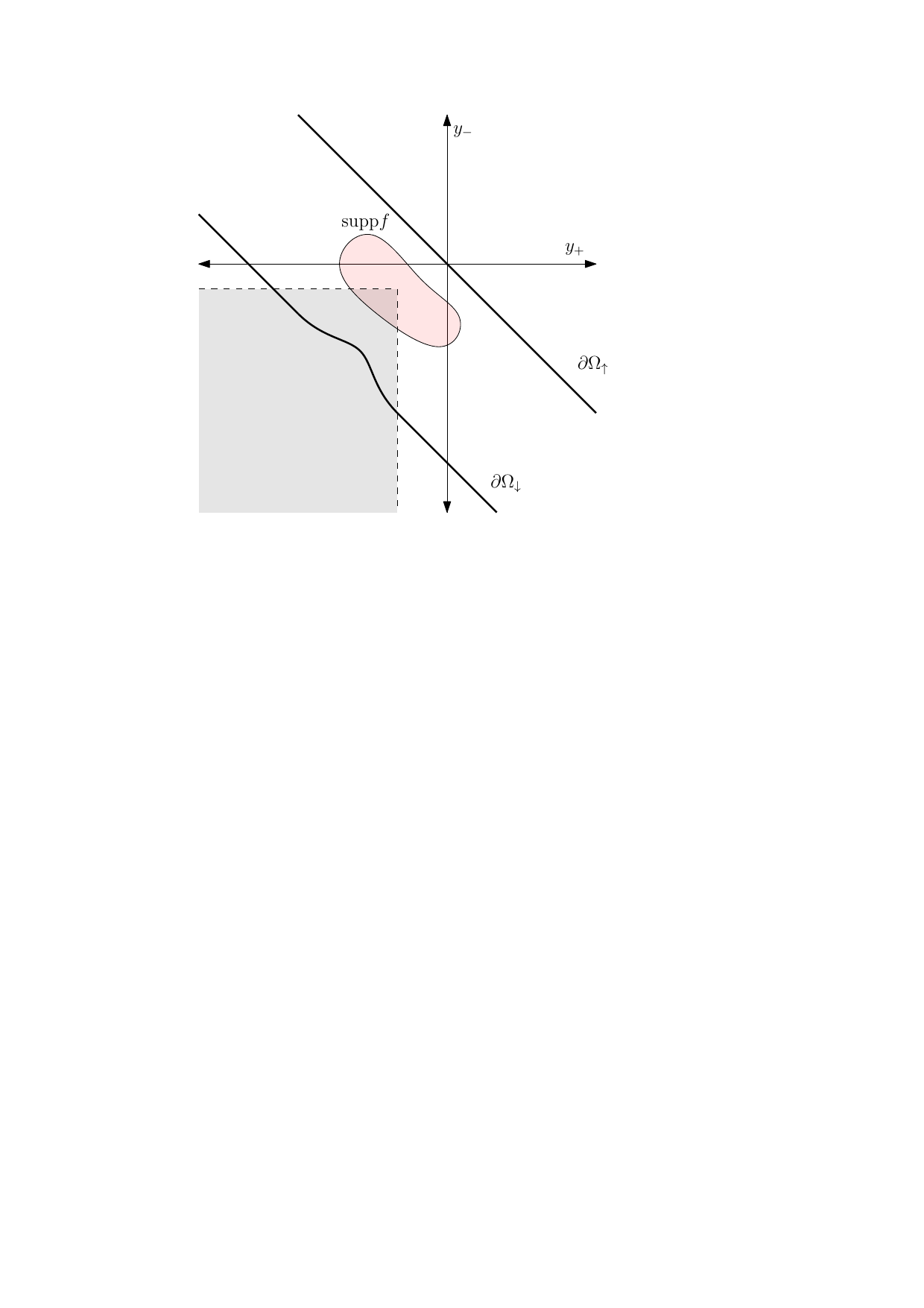}
    \caption{Domain $\Omega$ in $y_\pm$ coordinates. The support of the forcing profile $f$ is shaded and labeled, and the rectangular shaded region is the integration kernel in~\eqref{e-u0}.}
    \label{fig:ypm}
\end{figure}
See Figure~\ref{fig:ypm} for a diagram of the integration kernel with respect to the domain and $f$. By \eqref{lowerboundary2}, the integral is in fact over a compact region for any given $(y_+,y_-)$, hence it is well-defined.  Likewise, \eqref{lowerboundary2} implies that $\supp U_0 \cap \overline \Omega$ is compact, since $f$ is compactly supported.
By the subcriticality assumption, \eqref{lowerboundary} implies that the support of $U_0$ is inside $K \geq 0$.
Note that $U_0\in H^1(\overline{\Omega})$ since $\pa_{y_i} U_0 \in L^2$, $i=1,2.$ Moreover, one can check that
$$
g:= U_0|_{\partial\Omega_{\uparrow}}\in H^1_{\mathrm{comp}}(\partial\Omega_{\uparrow})
$$
retains the same regularity as $U_0$ despite the restriction.
Thus, summing up, we have established the following.
\begin{lemm}
Let $U_0$ be given by \eqref{e-u0}.  Then
\begin{equation}\label{e-g}
P(\lambda)U_0=f, \ U_0\in H^1_{\mathrm{comp}}(\overline{\Omega}), \ U_0|_{\partial\Omega_{\downarrow}}=0, \ g=U_0|_{\partial\Omega_{\uparrow}}\in H^1_{\mathrm{comp}}(\partial\Omega_{\uparrow}). 
\end{equation}
\end{lemm}
% Furthermore, observe that if $u \in \bar H^s_{\comp}(\Omega)$, $s \ge 0$, then $U_0 \in \bar H^{s + 1}_{\comp}(\Omega)$, which means $g \in H^s_{\comp}(\partial \Omega_\uparrow)$. 
%\jared{I commented out a confusing remark about higher regularity here.  To revisit?}

Now to solve $u$ in \eqref{inhomogeneous}, one only needs to solve for $w:=U_0-u$ that satisfies the homogeneous boundary value equation
\begin{equation}\label{bvp} 
\partial_{y_+}\partial_{y_-} w = 0, \quad w|_{\partial \Omega_{\downarrow}} = 0, \quad w|_{\partial \Omega_{\uparrow}} = g \in H^1_{\comp} (\partial \Omega_{\uparrow})
\end{equation}
\begin{lemm}\label{lem:w_decomp}
    Suppose $\Omega$ is subcritical for $\lambda\in (0,1)$ and $w\in \dot H^1_{\mathrm{loc}}(\Omega)$ solves \eqref{bvp}. Then there exist $w_{\pm}\in \dot H^1_{\mathrm{loc}}(\RR)$ such that 
    \[ w(y_+, y_-) = w_+(y_+)+ w_-(y_-). \]
\end{lemm}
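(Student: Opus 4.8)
The plan is to use the fact that the equation $\partial_{y_+}\partial_{y_-}w = 0$ forces $w$ to split into a function of $y_+$ plus a function of $y_-$, with subcriticality supplying precisely the connectivity needed to globalize the splitting. (Note that only the interior equation, not the boundary data in~\eqref{bvp}, is needed here.) First I would record the geometric input: in the $(y_+,y_-)$ coordinates the level sets $\{y_+ = \mathrm{const}\}$ and $\{y_- = \mathrm{const}\}$ are exactly the two families of characteristic lines, and by the discussion following Definition~\ref{def:subcritical} subcriticality guarantees that each such line meets $\partial\Omega_\uparrow$ and $\partial\Omega_\downarrow$ exactly once. Consequently every slice $\Omega\cap\{y_+=c\}$ and $\Omega\cap\{y_-=c'\}$ is a single open interval, and since $\Omega$ is connected its projections onto the $y_\pm$-axes are intervals.

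Next I would set $v_+:=\partial_{y_+}w$ and $v_-:=\partial_{y_-}w$, both lying in $L^2_{\mathrm{loc}}(\Omega)$ since $w\in\dot H^1_{\mathrm{loc}}(\Omega)$, and observe that the equation gives $\partial_{y_-}v_+=0$ and $\partial_{y_+}v_-=0$ distributionally on $\Omega$. On any product rectangle $R=I\times J\subset\Omega$, the classical one-variable fact together with Fubini shows $v_+(y_+,y_-)=a(y_+)$ for some $a\in L^2(I)$; because each full slice $\Omega\cap\{y_+=c\}$ is connected, the locally defined representatives agree on overlaps and patch into a single $a\in L^2_{\mathrm{loc}}$ on the $y_+$-projection with $v_+=a(y_+)$ on all of $\Omega$. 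The symmetric argument yields $b\in L^2_{\mathrm{loc}}$ with $v_-=b(y_-)$. I would then define $w_+(y_+):=\int^{y_+}a$ and $w_-(y_-):=\int^{y_-}b$ from fixed base points, extended arbitrarily to all of $\RR$; these lie in $H^1_{\mathrm{loc}}(\RR)=\dot H^1_{\mathrm{loc}}(\RR)$, the dotted and undotted spaces coinciding since $\RR$ has no boundary. By construction both partials of $w-w_+(y_+)-w_-(y_-)$ vanish, so this difference is locally constant, hence constant on the connected set $\Omega$, and absorbing the constant into $w_+$ gives the decomposition.

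The main obstacle is the patching step. On a single product rectangle the representation $v_+=a(y_+)$ is routine, but because the slices of $\Omega$ have $y_+$-dependent endpoints, one must check that the locally defined $a$'s are mutually consistent and assemble into one $L^2_{\mathrm{loc}}$ function on the whole projection. This is exactly where connectedness of the slices—equivalently, subcriticality—is essential: it ensures $v_+$ is constant along each entire characteristic segment, so that the value $a(y_+)$ is unambiguous; in the supercritical case a slice may be disconnected and this step genuinely fails. I would carry out the consistency check by covering $\Omega$ with product rectangles and using that any two points on a common slice are joined by a chain of overlapping rectangles inside $\Omega$.
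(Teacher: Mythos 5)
Your proposal is correct and takes essentially the same approach as the paper: both use the equation together with subcriticality (connectedness of the characteristic slices) to show that $\partial_{y_+}w$ is a function of $y_+$ alone lying in $L^2_{\mathrm{loc}}(\RR)$, and then integrate to produce $w_+$. The only cosmetic differences are that the paper gets the one-dimensional $L^2_{\mathrm{loc}}$ bound from a thin parallelogram hugging $\partial\Omega_{\uparrow}$ rather than by patching product rectangles, and it defines $w_-:=w-w_+$ directly (checking it depends only on $y_-$) instead of integrating $\partial_{y_-}w$ separately and removing a constant.
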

\begin{proof}
    Define $w_+^1:=\partial_{y_+}w$. Then we know 
    \[ w_+^1\in L^2_{\mathrm{loc}}(\Omega), \ \partial_{y_-}w_+^1=0. \]
    The second equation together with the assumption that $\Omega$ is subcritical shows that $w_+^1$ depends only on $y_+$. Since $y_+$ can take any value in $\RR$,  we know $w_+^1$ defines a function on $\RR$. For every $a>0$, there exists $\delta>0$ such that the parallelogram $\Omega_{a,\delta}:=\{(y_+, y_-) \mid |y_+|\leq a, \ -y_+-\delta \leq y_-\leq -y_+\}$ is a subset of $\Omega$. Thus we see that%\jared{Perhaps I'm confused but should $\partial_{y_-}w$ in middle be $\partial_{y_+}w$?  Also I think norms were supposed to be squared (changed this).} 
    \[ \|w_+^1\|_{L^2([-a,a])}^2\leq \delta^{-1}\|\partial_{y_+}w\|_{L^2(\Omega_{a,\delta})}^2 \leq \delta^{-1}\|w\|_{H^1(\Omega_{a,\delta})}^2<\infty. \]
    This shows that $w_+^1\in L^2_{\mathrm{loc}}(\RR)$. Define 
    \[ w_+(y_+):=\int_0^{y_+} w_+^1(s)\mathrm d s.  \]
    Then $w_+$ satisfies 
    \[ w_+\in H^1_{\mathrm{loc}}(\RR), \ \partial_{y_+}(w(y_+, y_-)-w_+(y_+))=0. \]
    This shows that $w_-:=w-w_+$ is a function depending only on $y_-$. Moreover, $\partial_{y_-}w_- = \partial_{y_-}w$. Similar argument as above shows that $w_-\in H^1_{\mathrm{loc}}(\RR)$.
\end{proof}

By abuse of notation, we also denote the pullbacks $y_\pm^* w_\pm$ by $w_\pm$, where $y_{\pm}^*$ denote projections $(y_+, y_-)\mapsto y_{\pm}$, so that $w_\pm$ can be viewed as elements of $\bar H^1_{\loc}(\Omega)$. In this sense, $w_\pm$ can be restricted to $\partial \Omega$, and the restrictions lie in $H^1_{\mathrm{loc}}(\partial \Omega)$. Note that 
\begin{equation*}
    w_\pm|_{\partial \Omega} = (\gamma^\pm)^* (w_\pm|_{\partial \Omega})
\end{equation*}
by \eqref{e-gamma_def}.
Applying the boundary conditions in \eqref{bvp} we have 
\[ w_+|_{\partial\Omega_{\uparrow}}+w_-|_{\partial\Omega_{\uparrow}} =g, \ w_+|_{\partial\Omega_{\downarrow}} + w_-|_{\partial\Omega_{\downarrow}}=0. \]
Therefore, using the $\mathbf y$ parametrization \eqref{eq:yparam} of $\partial\Omega_{\uparrow}$, we have 
\[\begin{split} 
w_{\pm}|_{\partial\Omega_{\uparrow}}(\theta) = & w_{\pm}|_{\partial\Omega_{\downarrow}}( \gamma^{\pm}(\theta) ) = -w_{\mp}|_{\partial\Omega_{\downarrow}}(\gamma^{\pm}(\theta)) = -w_{\mp}|_{\partial\Omega_{\uparrow}}( \gamma^{\mp}\circ\gamma^{\pm}(\theta) ) \\
= & -w_{\mp}|_{\partial\Omega_{\uparrow}}( b^{\pm 1}(\theta) ) = w_{\pm}|_{\partial\Omega_{\uparrow}}( b^{\pm 1}(\theta) ) - g(b^{\pm 1}(\theta)).
\end{split}\]
That is, 
\begin{equation}\label{e-1bounce}
    w_{\pm}|_{\partial\Omega_{\uparrow}} - (b^{\pm 1})^*( w_{\pm}|_{\partial\Omega_{\uparrow}} )  = -( b^{\pm 1} )^*g.
\end{equation}
Iterate \eqref{e-1bounce} $N$ times, restrict $w_+$ to the left/right fundamental intervals $\mathcal J_{\mathrm L}$, $\mathcal J_{\mathrm R}$ (see \S \ref{s-intro}), then differentiate both sides, and we find 
\begin{equation}\label{e-transport}
    \mathbf v_{\mathrm L} - \mathbf b^* \mathbf v_{\mathrm R} = \mathbf g.
\end{equation}
Here $\mathbf b=b_\lambda^N$ is the multi-bounce chess billiard map defined in \S \ref{s-intro} (we now suppress the $\lambda$ subscript) and %\jared{Should upper index of summation in equation for $\mathbf{g}$ be $N-1$? Compare to \eqref{eq:omegas}.}
\begin{equation}\begin{split}\label{e-vpm}
\mathbf v_{\bullet}:= & \mathrm d w_+|_{\mathcal J_{\bullet}} \in \mathring L^2(\mathcal J_{\bullet}; T^*\mathcal J_{\bullet}), \quad \bullet = \mathrm L, \mathrm R,   \\
\mathbf g:= & -\sum_{k=1}^{N} (b^{ k})^* \mathrm d g|_{\mathcal J_{\mathrm L}} \in \mathring L^2(\mathcal J_{\mathrm L}; T^*\mathcal J_{\mathrm L}). 
\end{split}\end{equation}

Let us take a step back and interpret all the new objects we have defined. We claim that $\mathbf v_{\mathrm L}$ and $\mathbf v_{\mathrm R}$ are essentially the Neumann data of the solution $u$ %near the left and the right infinity 
on $\mathcal J_{\mathrm L}$, $\mathcal J_{\mathrm R}$ respectively.  %\jared{We seem to be inconsistent here about notation $\dot H^s$ vs $H_0^s$; either is fine with me, but we should choose.}
\begin{lem}\label{l-neumann}
    Suppose $u \in \dot H^1_{\loc}$ satisfies the stationary internal wave equation~\eqref{inhomogeneous}. Then 
    \begin{equation*}
        v_{\uparrow} := \mathbf j^* (\partial_{y_+} u \, \mathrm d y_+)
    \end{equation*}
    is well-defined in $L^2_{\mathrm{comp}}(\partial \Omega_\uparrow; T^* \partial \Omega_{\uparrow})$, where $\mathbf j : \partial \Omega_\uparrow \to \RR^2$ is the canonical embedding. Furthermore,
    \begin{equation*}
        \mathbf v_{\bullet} =  -v_\uparrow|_{\mathcal J_{\bullet}}, \ \bullet = \mathrm L, \mathrm R
    \end{equation*}
    %as limits in $L^2$, 
    where $\vl$ and $\vr$ are as defined in~\eqref{e-vpm}.
\end{lem}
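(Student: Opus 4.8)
The plan is to work through the splitting $u = U_0 - w$ already set up above: $U_0$ is the explicit particular solution \eqref{e-u0}, and by Lemma~\ref{lem:w_decomp} the homogeneous remainder decomposes as $w = w_+(y_+) + w_-(y_-)$ with $w_\pm \in \dot H^1_\loc(\RR)$. Since $w_-$ is independent of $y_+$, differentiating gives $\partial_{y_+}u = \partial_{y_+}U_0 - w_+'(y_+)$, so the lemma reduces to understanding the boundary trace of each term and then comparing with $\mathbf v_\bullet = dw_+|_{\mathcal J_\bullet}$ from \eqref{e-vpm}.

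The substantive point is that $v_\uparrow$ is well defined at all, since $u \in \dot H^1_\loc$ only gives $\partial_{y_+}u \in L^2_\loc(\Omega)$, whose restriction to a curve is a priori meaningless. I would extract the missing regularity from the equation itself: with $h := \partial_{y_+}u$, the PDE $\partial_{y_+}\partial_{y_-}u = 4f$ reads $\partial_{y_-}h = 4f \in L^2_\comp(\Omega)$, so $h$ carries a full $L^2$ derivative in the $y_-$ direction. Because $\partial\Omega_\uparrow = \{y_+ + y_- = 0\}$ is noncharacteristic for $\partial_{y_-}$ — its conormal $dy_+ + dy_-$ pairs nontrivially with $\partial_{y_-}$, i.e.\ it is transverse to the lines $\{y_+ = \mathrm{const}\}$ along which $h$ is regular — the one-sided (anisotropic) trace theorem produces a well-defined $h|_{\partial\Omega_\uparrow} \in L^2_\loc$, and hence $v_\uparrow = \mathbf j^*(h\,dy_+)$ is a well-defined $L^2_\loc$ one-form. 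The same can be read off the splitting as a cross-check: $w_+' \in L^2_\loc(\RR)$ is a function of $y_+$ alone, so its pullback under \eqref{eq:yparam} is manifestly $L^2_\loc$ in $\theta$, while $\partial_{y_+}U_0(y_+,y_-) = 4\int_{-\infty}^{y_-} f(y_+, s_-)\,ds_-$ restricts on $\partial\Omega_\uparrow$ (where $y_- = -y_+$) to a function that a Cauchy--Schwarz estimate in $s_-$ bounds in $L^2$; this $U_0$-contribution is compactly supported in $\theta$ and accounts for the compact part of the data.

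For the identity $\vl = -v_\uparrow|_{\jl}$ and $\vr = -v_\uparrow|_{\jr}$, I would show that $\partial_{y_+}U_0$ vanishes on the fundamental intervals. On $\supp f$ both $x_1$ (by compact support) and $x_2$ (by the bounded height of the channel) are bounded, so $y_+ = x_1/\lambda + x_2/\sqrt{1-\lambda^2}$ ranges over a bounded set; enlarging $M$ if necessary, we may place $\jl$ and $\jr$ outside this $y_+$-range, so that $f(y_+(\theta), \cdot) \equiv 0$ and therefore $\partial_{y_+}U_0 \equiv 0$ on $\jl \cup \jr$. Consequently $\partial_{y_+}u = -w_+'$ there, and since $\mathbf j^*(w_+'\,dy_+) = d(w_+|_{\partial\Omega_\uparrow})$ by the chain rule, we get $v_\uparrow|_{\mathcal J_\bullet} = -dw_+|_{\mathcal J_\bullet} = -\mathbf v_\bullet$, which is the assertion.

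The main obstacle is the trace step: the only regularity available for $\partial_{y_+}u$ is the anisotropic $y_-$-smoothness coming from the equation, so one must invoke a trace theorem tailored to $\partial\Omega_\uparrow$ being noncharacteristic for $\partial_{y_-}$ rather than the usual isotropic $H^1$ trace. Once this is in place, the remaining steps are bookkeeping with the decomposition and with the support of $f$.
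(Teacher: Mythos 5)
Your proposal is correct and takes essentially the same route as the paper: decompose $u = U_0 - (w_+ + w_-)$ via Lemma~\ref{lem:w_decomp}, write $\partial_{y_+}u\,\mathrm{d}y_+ = \partial_{y_+}U_0\,\mathrm{d}y_+ - \mathrm{d}w_+$, take traces of each term, and use that $\partial_{y_+}U_0$ (indeed $U_0$ itself) vanishes near $\jl \cup \jr$ for large $M$ to conclude $\mathbf v_\bullet = -v_\uparrow|_{\mathcal J_\bullet}$. Your noncharacteristic/anisotropic trace argument (plus the Cauchy--Schwarz computation from the explicit formula \eqref{e-u0}) is simply a more detailed justification of the trace step that the paper dispatches in one line from $U_0 \in H^1_{\comp}(\overline\Omega)$ and the regularity of $w_+$.
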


\noindent
\begin{remark}\label{rem:neumann} It is easy to check that in the $x_1$ coordinates on $\partial \Omega$, $v_\uparrow$ is given by 
\begin{equation*}
    v_\uparrow = \tfrac{c(\lambda)}{2} \partial_{x_2} u|_{\partial \Omega_\uparrow} \mathrm{d}x_1.
\end{equation*}
Therefore, $\vl$ and $\vr$ are simply a multiple of the Neumann data.
\end{remark}
\begin{proof}
    Recall that 
    \[u = U_0 - ( w_++w_- )\]
    where $w_\pm$ depends only on $y_\pm$. Therefore, 
    \[ \partial_{y_+}u(y_+, y_-)\mathrm d y_+ = \partial_{y_+}U_0(y_+, y_-)\mathrm d y_+ - \mathrm d w_+. \]
    %\[\partial_{y_+} u(y_+, y_-) = w'_+(y_+) + \partial_{y_+} u_0(y_+, y_-).\]
    Since %\jared{Seems like every Sobolev space should have dot or bar if we're to be consistent.} 
    $U_0 \in H^1_{\mathrm{comp}}(\overline \Omega)$, it follows that $v_\uparrow$ is well-defined in $L^2_{\mathrm{comp}}$. %Using the coordinates $\theta$, we have
    %\[v_\uparrow = \partial_\theta w_+|_{\partial \Omega} (\theta)d \theta + r \quad \text{for some} \quad r \in \mathscr S(\partial \Omega_{\uparrow}),\]
    %and the relationship to $v_L$ and $v_R$ follows.
    The relationship to $\vl$ and $\vr$ follows from the fact that $U_0$ vanishes in a neighborhood of $\jl \cup \jr.$%\jared{Strengthened statement about vanishing of $U_0$ to more than just vanishing Dirichlet data.  Let's make sure we're ensured this actually holds!  Can't find it stated above as currently written.}
\end{proof}

Motivated by the above lemma, we call $\vl$ and $\vr$ the Neumann data at left and right infinity respectively. Next, we retrace our steps and verify that if~\eqref{e-transport} is satisfied, we indeed have a solution with the given Neumann data. 
\begin{lem}\label{l-existence}
    Assume that $f \in \bar H^s_{\comp}$ for some $s \ge 0$. Let $\vl \in \mathring H^s(\jl; T^* \jl)$ and $\vr \in \mathring H^s(\jr; T^* \jr)$. If $\vl$ and $\vr$ satisfies \eqref{e-transport}, then there exists $u \in \dot H^1_{\loc}(\Omega) \cap \bar H^{s + 1}_{\loc}(\Omega)$ that satisfies~\eqref{inhomogeneous} with $\vl$ and $\vr$ as the Neumann data on the left and the right fundamental intervals respectively.
\end{lem}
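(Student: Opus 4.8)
The plan is to run the construction of Lemma~\ref{lem:w_decomp} in reverse: given $\vl$ and $\vr$ satisfying the transport equation~\eqref{e-transport}, we must reconstruct $w_+|_{\partial\Omega_\uparrow}$ (equivalently $w_-|_{\partial\Omega_\uparrow}$), then solve the homogeneous boundary value problem~\eqref{bvp}, and finally set $u = U_0 - w$. The key point is that~\eqref{e-transport} encodes exactly the compatibility needed for the one-bounce relation~\eqref{e-1bounce} to admit a globally defined solution on $\partial\Omega_\uparrow$. I would first recover the full Neumann datum $v_\uparrow = \mathbf j^*(\partial_{y_+}u\,\mathrm{d}y_+)$ on all of $\partial\Omega_\uparrow$. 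Outside $[-M,M]$ the billiard map is translation by $2\pi$, so $\vl$ on $\jl$ and $\vr$ on $\jr$ propagate by the shift to determine $\mathrm{d}w_+|_{\partial\Omega_\uparrow}$ on the two flat ends; the relation~\eqref{e-1bounce}, iterated, then fills in the finitely many intermediate bounces between $\jl$ and $\jr$, and~\eqref{e-transport} is precisely the condition that these two propagations agree. This produces a well-defined $\mathrm{d}w_+ \in L^2_{\loc}(\partial\Omega_\uparrow;T^*\partial\Omega_\uparrow)$, which I integrate (using that $g = U_0|_{\partial\Omega_\uparrow}$ has compact support, so $w_+|_{\partial\Omega_\uparrow}$ is determined up to a constant that may be fixed arbitrarily since only differentials enter~\eqref{inhomogeneous} through $u$).

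Once $w_+|_{\partial\Omega_\uparrow}$ is in hand, I would extend it to a function $w_+(y_+)$ of $y_+$ alone via the parametrization~\eqref{eq:yparam}, define $w_-(y_-)$ from the boundary condition $w_+|_{\partial\Omega_\downarrow} + w_-|_{\partial\Omega_\downarrow}=0$ pushed through the involutions, and set $w = w_+ + w_-$. The subcriticality hypothesis guarantees that each characteristic line meets each boundary exactly once, so the involutions $\gamma^\pm$ and hence $b$ are genuine diffeomorphisms and the extension of $w_\pm$ to functions of $y_\pm$ alone is consistent; one checks that $w$ so defined satisfies $\partial_{y_+}\partial_{y_-}w=0$ automatically and matches the Dirichlet conditions on both boundaries. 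Setting $u = U_0 - w$ then gives $P(\lambda)u = f$ with $u|_{\partial\Omega}=0$, and by construction $\mathbf j^*(\partial_{y_+}u\,\mathrm{d}y_+)$ restricts to $-\vl$ and $-\vr$ on the fundamental intervals, matching the Neumann-data convention of Lemma~\ref{l-neumann}.

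For the regularity claim $u\in\dot H^1_{\loc}(\Omega)\cap\bar H^{s+1}_{\loc}(\Omega)$, I would track Sobolev orders through each step: $f\in\bar H^s_{\comp}$ gives $U_0\in\bar H^{s+1}_{\comp}$ and $g\in H^s_{\comp}(\partial\Omega_\uparrow)$ by the remark following~\eqref{e-g}, while the hypotheses $\vl\in\mathring H^s(\jl)$, $\vr\in\mathring H^s(\jr)$ feed exactly $H^s$ Neumann data into the reconstruction. Since $w_\pm$ are functions of a single variable built by integrating $H^s$ one-forms, they lie in $H^{s+1}_{\loc}$ of that variable, and pulling back by the smooth projections $y_\pm^*$ keeps them in $\bar H^{s+1}_{\loc}(\Omega)$; the Dirichlet condition places $u$ in $\dot H^1_{\loc}(\Omega)$ as well. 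The main obstacle I anticipate is the \emph{global consistency} of the reconstructed $\mathrm{d}w_+$ on $\partial\Omega_\uparrow$: one must verify carefully that the value propagated from $\jl$ by the shift on the left end, carried through the $N$ bounces by the iterated one-bounce relation, agrees with the value propagated from $\jr$ on the right end, and that this single-valuedness is equivalent to~\eqref{e-transport} and not an additional constraint. Checking that the mean-zero conditions built into $\mathring H^s$ are compatible with this gluing — so that no secular growth or multivaluedness appears on the flat ends — is the delicate bookkeeping step.
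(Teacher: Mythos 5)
Your proposal is correct and follows essentially the same route as the paper's proof: the paper likewise reconstructs $w_+|_{\partial\Omega_\uparrow}$ from $\vl$ by iterating the one-bounce relation across the tiles $b^k(\jl)$ (its function $\omega$ in \eqref{eq:omegas}, satisfying the cohomological equation \eqref{eq:cohom}), uses \eqref{e-transport} together with $\int \vl = 0$ to see that the gluing is single-valued, continuous, and delivers $\vr$ on $\jr$, and then sets $u = U_0 - (w_+ + w_-)$ with $w_\pm$ the pullbacks defined in \eqref{eq:w.omega}. One small correction to your bookkeeping: the regularity input is $g \in H^{s+1}_{\comp}(\partial\Omega_\uparrow)$ (as stated in the paper's proof of this lemma), not the $H^s_{\comp}$ you quote from the earlier remark; that extra derivative is exactly what makes $w_+$ land in $H^{s+1}_{\loc}$ on the middle tiles, where pullbacks of $g$ itself (not just of the integrated data $\vl$) enter the reconstruction.
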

\begin{proof}
    Let $U_0$ and $g$ be as defined in~\eqref{e-u0} and~\eqref{e-g}. Let $\jl = [\theta_0, \theta_0+2\pi)$ be the left fundamental interval defined in~\eqref{eq:J_def} using the parametrization $\mathbf y$ defined in~\eqref{eq:yparam}. Notice that $b^k(\jl)$, $k \in \mathbb Z$, tiles $\partial \Omega_{\uparrow}$. Then we can define a function $ \varpi$ on $\partial \Omega_{\uparrow}$ by 
    \begin{equation}\label{eq:omegas}
        \begin{aligned}
             \varpi|_{\jl}(\theta) := & \int_{\theta_0}^\theta \mathbf \vl, \\
             \varpi|_{b^k(\jl)} := & (b^{-k})^* ( \varpi|_{\jl}), \\
             \varpi|_{b^{-k}(\jl)}:= & (b^k)^*( \varpi|_{\jl}) - \sum_{n=1}^k (b^n)^*g|_{b^{-k}(\jl)}, \ k\geq 1.
        \end{aligned}
    \end{equation}
    One can check that $ \varpi$ satisfies 
    \begin{equation}\label{eq:cohom}  \varpi- b^* \varpi = -b^*g. \end{equation}
    Note that since $f \in \bar H^s_{\comp}$, we have $g \in H_{\comp}^{s + 1}(\partial \Omega_\uparrow)$. Combined with the assumption that $\int \vl = 0$, it follows that $ \varpi$ is in fact continuous on the circle, as well as lying in $H^{s + 1}_{\loc}(\partial \Omega_\uparrow)$. Observe that there exist unique $w_{\pm} \in \bar H^{s + 1}_{\loc}(\Omega)$ such that for $(y_+, y_-)\in \Omega$,
    \begin{equation}\label{eq:w.omega}\begin{aligned}
        w_+(y_+, y_-) &:=   \varpi(\theta), \text{ when } y_+ = y_+(\theta), \\
        w_-(y_+, y_-) &:=  g(\theta) -  \varpi(\theta), \text{ when } y_- = y_-(\theta)
        %w_+(x) =  \varpi(\theta) \quad \text{when} \quad \ell^\pm(x) = \ell^\pm(\mathbf y(\theta)).
    \end{aligned}\end{equation}
    where $\mathbf y(\theta)=(y_+(\theta), y_-(\theta))$.
    %Similarly, there exists unique $w_-$ such that 
    %\begin{equation*}
    %    w_+(x) =  \varpi(\theta) - g(\theta) \quad \text{when} \quad \ell^\pm(x) = \ell^\pm(\theta).
    %\end{equation*}
    We claim that
    \begin{equation*}
        u:=U_0-(w_+ + w_-)
        %u := w_+ - w_- + u_0
    \end{equation*}
    is our desired solution. Clearly, $P(\lambda)u = f$, $u \in \bar H^1_{\loc}(\Omega)$, and $u|_{\partial \Omega_\uparrow} = 0$. Since $$(\gamma^+)^* w_+(y_+)=w_+(y_+),$$ while $$(\gamma^+)^* w_-(y_-)=w_-(y_-\circ \gamma^+)=w_-(y_-\circ \gamma^-\circ \gamma^+)=b^* w_-(y_-),$$ the relations \eqref{eq:w.omega} together with \eqref{eq:cohom} yield
    \[ (\gamma^+)^*(u|_{\partial\Omega_{\downarrow}}) = 0-(  \varpi + ( b^*g - b^* \varpi ) )=0. \]
    %\begin{equation}
    %    (\gamma^+)^*(u|_{\partial \Omega}) =  \varpi - b^{*}  \varpi + b^{*} g = 0,
    %\end{equation}
    Since $(\gamma^+)^2=\mathrm{Id}$, we conclude that $u|_{\partial\Omega_{\downarrow}}=0$.
    Thus we have $u \in \dot H^1_{\loc}(\Omega) \cap \bar H^{s + 1}_{\loc}(\Omega)$. Finally, by the second equation in \eqref{eq:omegas} with $k=N$, the right Neumann data of the solution $u$ is precisely given by $\vr$ satisfying the relation~\eqref{e-transport}. 
\end{proof}

\section{Scattering matrix}

%\subsection{Homogeneous problem}
We now consider the homogeneous stationary internal wave equation~\eqref{stationary}. The the left and right Neumann data defined in~\eqref{e-vpm} satisfies
\begin{equation}\label{homogpullback}
    \vl - \mathbf b^* \vr = 0.
\end{equation}
Using the parametrization $\mathbf y$ in \S \ref{s-stationary}, we can identify $\jl$ and $\jr$ with $\mathbb S^1 = \RR/2 \pi \mathbb{Z}$ so that $\vl, \vr \in \mathring L^2(\mathbb S^1; T^* \mathbb S^1)$ and $\mathbf b \in C^\infty(\mathbb S^1; \mathbb S^1)$. Then taking the positive and negative Fourier projectors $\Pi^{\pm}$, the outgoing data can be expressed as 
\begin{align}\label{scmatrix0}
    \begin{pmatrix}
        \Pi^-\vl \\ \Pi^+\vr
    \end{pmatrix}
    = & \begin{pmatrix}
        \Pi^-\mathbf b^* \vr \\ \Pi^+ \mathbf b^{-*}\vl
    \end{pmatrix} \\ \label{scmatrix0.5}
    = & \begin{pmatrix} 
        0 & \Pi^-\mathbf b^* \Pi^+ \\ \Pi^+\mathbf b^{-*} \Pi^- & 0
    \end{pmatrix}
    \begin{pmatrix}
        \Pi^-\vl \\ \Pi^+\vr
    \end{pmatrix}
    + \begin{pmatrix}
        \Pi^-\mathbf b^*\Pi^- & 0 \\ 0 & \Pi^+\mathbf b^{-*}\Pi^+
    \end{pmatrix}
    \begin{pmatrix}
        \Pi^-\vr \\ \Pi^+\vl
    \end{pmatrix}
\end{align}
where $\mathbf b^{-*}:=(\mathbf b^{-1})^*$.
%\begin{multline}
%    \begin{pmatrix} \Pi^- v_L \\ \Pi^+ v_R \end{pmatrix} = \begin{pmatrix} \Pi^- \varphi^* v_R \\ \Pi^+ \varphi^{-*} v_L \end{pmatrix} = \begin{pmatrix} 0 & \Pi^- \varphi^* \Pi^+ \\ \Pi^+ \varphi^{-*} \Pi^- & 0 \end{pmatrix} \begin{pmatrix} \Pi^- v_L \\ \Pi^+ v_R \end{pmatrix}  \\ 
%    + \begin{pmatrix} \Pi^- \varphi^* \Pi^- & 0 \\ 0  & \Pi^+ \varphi^{-*} \Pi^+ \end{pmatrix} \begin{pmatrix} \Pi^- v_R \\ \Pi^+ v_L \end{pmatrix}, 
%\end{multline}
%where $\varphi^{-*} := (\varphi^{-1})^*$.
We rewrite the equation as
\begin{equation}\label{scmatrix1}
    \begin{pmatrix} \mathrm{Id} & -\Pi^-\mathbf b^*\Pi^+ \\ -\Pi^+\mathbf b^{-*} \Pi^- & \mathrm{Id}  \end{pmatrix}
    \begin{pmatrix} \Pi^-\vl \\ \Pi^+\vr \end{pmatrix}
    =\begin{pmatrix} \Pi^-\mathbf b^*\Pi^- & 0 \\ 0 & \Pi^+\mathbf b^{-*}\Pi^+ \end{pmatrix}
    \begin{pmatrix} \Pi^-\vr \\ \Pi^+\vl \end{pmatrix}.
\end{equation}
Our goal is to recover the outgoing data $\begin{pmatrix} 
\Pi^-\vl \\ \Pi^+\vr \end{pmatrix}$ uniquely in terms of the incoming data $\begin{pmatrix} \Pi^-\vr \\ \Pi^+\vl \end{pmatrix}$; to do so it suffices to invert
\begin{equation}\label{Tdef}
\begin{gathered}
    \mathbf T : \Pi^- L^2(\mathbb S^1; T^* \mathbb S^1) \times \Pi^+ L^2(\mathbb S^1; T^* \mathbb S^1) \to \Pi^- L^2(\mathbb S^1; T^* \mathbb S^1) \times \Pi^+ L^2(\mathbb S^1; T^* \mathbb S^1),\\
    \mathbf T := 
    \begin{pmatrix}
        \Id & -\Pi^- \mathbf b^* \Pi^+ \\
        -\Pi^+ \mathbf b^{-*} \Pi^- & \Id
    \end{pmatrix}.
\end{gathered}
\end{equation}
To invert $\mathbf T$ we will need the following lemma. 
\begin{lem}\label{l-low_freq}
Let $\varphi$ be an orientation-preserving diffeomorphism of $\mathbb S^1$ and let $\bv \in L^2(\mathbb S^1; T^* \mathbb S^1)$. Then
\begin{align*}
\Pi^- \varphi^* \Pi^- \bv &= 0 \text{ implies }\Pi^- \bv = 0,\\
\Pi^+ \varphi^* \Pi^+ \bv &= 0 \text{ implies }\Pi^+ \bv = 0. 
\end{align*}
\end{lem}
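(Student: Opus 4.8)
The plan is to interpret the one-form $v$ as $v = h\,\mathrm{d}\theta$ for some $h \in L^2(\mathbb{S}^1)$, and to understand the statement as a claim about holomorphic extensions. Recall that $\Pi^-$ projects onto the negative Fourier modes, so $\Pi^- v = 0$ exactly when $h$ extends holomorphically to the disk (with the right orientation convention), i.e.\ $h$ is the boundary value of a function holomorphic in $\{|z| < 1\}$ if we write $\theta \mapsto z = e^{i\theta}$ (the precise disk—inside or outside—depends on sign conventions, but the structure is the same either way). The hypothesis $\Pi^- \varphi^* \Pi^- v = 0$ should be read as: after pulling back the \emph{negative-mode part} of $v$ by the diffeomorphism $\varphi$, the result again has no negative modes. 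I want to leverage that $\varphi$ is an orientation-preserving diffeomorphism, hence extends to a boundary homeomorphism of the circle that respects the cyclic order, to propagate the holomorphicity constraint.

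First I would set $w := \Pi^- v$, so that $w$ is (a one-form whose coefficient is) the boundary value of a function $W$ holomorphic in the exterior disk with $W(\infty) = 0$ (using the convention that negative modes $e^{-ik\theta}$, $k>0$, are the ones that extend outward). The hypothesis then reads $\Pi^-(\varphi^* w) = 0$, meaning $\varphi^* w$ has \emph{only} nonnegative modes. Since pullback by an orientation-preserving diffeomorphism is, up to the Jacobian factor carried by the one-form, given by composition $W \mapsto W \circ \varphi$, the key structural fact is that pulling back a one-form commutes naturally with the circle as the boundary of a disk: $\varphi$ extends to an orientation-preserving self-homeomorphism of the closed disk, so $\varphi^*$ preserves the space of boundary values of functions holomorphic in the exterior region. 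Hence $\varphi^* w$ is itself the boundary value of an exterior-holomorphic one-form, i.e.\ $\varphi^* w$ lies entirely in the negative modes, so $\Pi^+(\varphi^* w) = \varphi^* w$ combined with the hypothesis $\Pi^-(\varphi^* w) = 0$ forces, after pairing, that $\varphi^* w$ has \emph{both} no strictly negative and (being exterior-holomorphic with the one-form normalization) no strictly positive part, leaving only the zero mode. Because $\varphi^* w$ is a one-form with vanishing total integral (mean zero is preserved under pullback of exact-pairing forms), the zero mode also vanishes, whence $\varphi^* w = 0$ and therefore $w = \Pi^- v = 0$.

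The main obstacle, and the step I would spend the most care on, is making precise the claim that $\varphi^*$ preserves exterior-holomorphic boundary values. This is \emph{not} literally true for function pullback—composing a holomorphic function with a mere diffeomorphism of the boundary generally destroys holomorphicity. The correct mechanism is that the one-form $w$, being of pure negative type, pairs to zero against \emph{all} boundary values of functions holomorphic in the interior disk; concretely, $\int_{\mathbb{S}^1} (\varphi^* w)\, \psi = \int_{\mathbb{S}^1} w\, (\varphi^{-1})^*\psi$ for test functions $\psi$, and I must show that $(\varphi^{-1})^*$ maps the relevant Hardy-type space into itself up to lower-order terms. I expect the clean route is to test $\varphi^* w$ against monomials $e^{ik\theta}$ for $k \le 0$ and use the hypothesis to kill the $k<0$ pairings, then use an argument-principle or winding-number computation—exploiting that $\varphi$ is orientation-preserving—to show the only surviving contribution is the mean, which vanishes. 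If a fully elementary Fourier argument proves delicate, I would instead invoke the theory of Hilbert transforms / Toeplitz operators: the hypothesis says the Toeplitz operator $\Pi^- \varphi^* \Pi^-$ annihilates $v$, and orientation-preservation guarantees this Toeplitz operator has trivial kernel on the negative Hardy space (its symbol has the correct winding), yielding $\Pi^- v = 0$ directly.
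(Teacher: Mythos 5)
Your intuition points in the right direction: the lemma is indeed about how pullback by an orientation-preserving diffeomorphism interacts with Hardy-type decompositions, and you correctly flag the central pitfall yourself --- composition with a diffeomorphism does \emph{not} preserve boundary values of holomorphic functions. The problem is that none of your three proposed repairs closes this gap. The first (``$\varphi^*$ preserves the space of boundary values of functions holomorphic in the exterior region'') is exactly the false statement. The second (test against monomials, then an ``argument-principle or winding-number computation'') is a hope rather than an argument: you never exhibit a quantity whose invariance under $\varphi^*$ forces the conclusion. The third, the Toeplitz fallback, is circular: $\Pi^-\varphi^*\Pi^-$ is the compression of a \emph{composition} operator, not of a multiplication operator, so it has no symbol in the classical Toeplitz sense; the standard winding-number/index theory, and crucially Coburn's lemma (which is what upgrades index zero to trivial kernel for genuine Toeplitz operators), do not apply. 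Triviality of the kernel of $\Pi^-\varphi^*\Pi^-$ on the negative Hardy space is precisely the statement to be proven, so invoking ``the theory'' assumes the conclusion. What \emph{is} available by soft arguments --- wavefront set calculus shows $\Pi^+\varphi^*\Pi^-$ is smoothing, hence $\Pi^-\varphi^*\Pi^-$ is Fredholm of index $0$ on $\Pi^- L^2$ --- yields invertibility only modulo a possible finite-dimensional kernel, which is exactly the gap.

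The paper closes the gap with two concrete ingredients that your sketch lacks. First, from $\Pi^-\varphi^*\Pi^- v=0$ one gets $\varphi^*\Pi^- v=(I-\Pi^-)\varphi^*\Pi^- v$, and the operator on the right is smoothing (wavefront set calculus, using orientation-preservation), so $\Pi^- v$ is smooth; this is the rigorous form of your ``pullback almost preserves Hardy spaces'' idea. Second, and decisively: letting $w(\theta)=\int_0^\theta \Pi^- v$ (well defined on $\mathbb S^1$ since $\int\Pi^- v=0$), the paper considers the flux $\mathbf F(w)=\frac1i\int_{\mathbb S^1}\overline{w}\,\mathrm d w = 2\pi\sum_k k\lvert\widehat w(k)\rvert^2$. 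If $\Pi^- v\neq 0$, then $w$ has only nonpositive modes with at least one strictly negative mode nonzero, so $\mathbf F(w)<0$; but $\mathbf F$ is the integral of a $1$-form, hence invariant under orientation-preserving pullback, so $\mathbf F(\varphi^* w)<0$ as well, forcing $\varphi^* w$ --- and hence $\mathrm d(\varphi^* w)=\varphi^*\Pi^- v$ --- to retain a nonzero negative Fourier mode, contradicting the hypothesis. This sign-definite, diffeomorphism-invariant quantity is the precise realization of the ``winding'' heuristic you were reaching for; without it (or an equivalent substitute), your outline does not constitute a proof.
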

\begin{proof}
We treat the first case; the proof of the second is virtually identical.
Assume for the sake of contradiction that $\Pi^- \bv \neq 0$ and
$$
\Pi^- \varphi^* \Pi^- \bv=0,
$$
hence
\begin{equation}\label{e-wformula}
\varphi^* \Pi^-\bv = (\Id-\Pi^-) \varphi^* \Pi^- \bv.
\end{equation}
The operator on the right-hand-side of \eqref{e-wformula} is a smoothing operator (by the calculus of wavefront sets, using the fact that $\varphi$ is orientation-preserving), hence $\varphi^* \Pi^-\bv \in C^\infty(\mathbb S^1; T^* \mathbb S^1)$.  Thus also $\Pi^-\bv \in C^\infty(\mathbb S^1; T^* \mathbb S^1).$

Note that $\int \Pi^-\bv = 0$. We can then define the function
\begin{equation*}
    w(\theta) = \int_0^\theta \Pi^- \bv \in C^\infty(\mathbb S^1).
\end{equation*}
Clearly, $\widehat w(k) := \frac{1}{2\pi}\int_{\mathbb S^1} e^{-i k \theta} w(\theta)\, d\theta = 0$ for all $k > 0$. Therefore, 
\begin{equation*}
    \mathbf F(w) := \frac{1}{i} \int_{\mathbb S^1} \overline w \mathrm dw = 2\pi\sum_{k\leq 0}k|\widehat w(k)|^2 < 0.
\end{equation*}
Note that $\mathbf F(w) = \mathbf F(\varphi^* w)$. Therefore, there exist $k_- < 0$ such that $\widehat{\varphi^* w}(k_-) \neq 0$. Since
\[\varphi^* \Pi^- v = \mathrm d\varphi^* w\]
it follows that $\widehat{\varphi^* \Pi^- v}(k_-) \neq 0$, which contradicts $\Pi^- \varphi^* \Pi^- v = 0$. Therefore we must have $\Pi^- v = 0$. 
\end{proof}

Now it follows that $\mathbf T$ is invertible. 
\begin{lem}\label{l-ker}
The nullspace of $\mathbf T$ on $\Pi^- L^2(\mathbb S^1; T^* \mathbb S^1) \times \Pi^+ L^2(\mathbb S^1; T^* \mathbb S^1)$ is trivial.
\end{lem}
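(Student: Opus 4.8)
The plan is to show that any $(a,b)$ in $\ker\mathbf T$, with $a=\Pi^-a$ and $b=\Pi^+b$, must vanish, by producing from the kernel equation a configuration to which Lemma~\ref{l-low_freq} applies and then closing the argument with the invariant quadratic form $\mathbf F$ from the proof of that lemma. Writing $\varphi:=\mathbf b$ (an orientation-preserving diffeomorphism of $\mathbb S^1$, as is $\varphi^{-1}$), the equation $\mathbf T(a,b)=0$ unwinds to
\[
a=\Pi^-\varphi^* b,\qquad b=\Pi^+(\varphi^{-1})^* a .
\]
Since pullback by an orientation-preserving diffeomorphism preserves the mean, every one-form here is mean-zero, so that $\Pi^++\Pi^-=\mathrm{Id}$ on all of them and $\mathbf F$ is defined on their primitives.

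First I would substitute the second relation into the first to isolate a term annihilated by $\Pi^-\varphi^*$. Set $d:=\Pi^-(\varphi^{-1})^*a$, so that $(\varphi^{-1})^*a=b+d$ and hence $\varphi^*\Pi^+(\varphi^{-1})^*a=\varphi^*\bigl((\varphi^{-1})^*a-d\bigr)=a-\varphi^* d$. Applying $\Pi^-$ and using the first relation $a=\Pi^-\varphi^*\Pi^+(\varphi^{-1})^*a$ together with $\Pi^-a=a$ gives $\Pi^-\varphi^* d=0$. Because $d=\Pi^-d$, this is precisely the hypothesis of Lemma~\ref{l-low_freq} with $v=d$, so $d=0$. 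Consequently $(\varphi^{-1})^*a=b$, equivalently $a=\varphi^* b$.

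It remains to deduce $a=b=0$ from $a=\varphi^* b$, which I would do with the functional $\mathbf F$ of Lemma~\ref{l-low_freq}. Let $w_a:=\int_0^\theta a$ and $w_b:=\int_0^\theta b$, which are well-defined on $\mathbb S^1$ by mean-zeroness. Since $a$ has only negative modes, $w_a$ has only nonpositive modes, whence $\mathbf F(w_a)=2\pi\sum_{k}k|\widehat{w_a}(k)|^2\le 0$, with equality iff $a=0$; symmetrically $\mathbf F(w_b)\ge 0$, with equality iff $b=0$. The identity $a=\varphi^* b$ forces $w_a=\varphi^* w_b$ up to an additive constant, which $\mathbf F$ does not see, so $\mathbf F(w_a)=\mathbf F(\varphi^* w_b)=\mathbf F(w_b)$ by orientation-preserving invariance. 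Thus $\mathbf F(w_a)=\mathbf F(w_b)=0$, and therefore $a=b=0$.

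I expect the main obstacle to be the bookkeeping in the second paragraph: one must arrange the substitution so that exactly one term of the form $\Pi^-\varphi^*\Pi^-(\cdot)$ survives, which hinges on $\varphi^*(\varphi^{-1})^*=\mathrm{Id}$ and on $\Pi^++\Pi^-=\mathrm{Id}$ for mean-zero one-forms. Once Lemma~\ref{l-low_freq} delivers $a=\varphi^* b$, the sign dichotomy for $\mathbf F$ is routine.
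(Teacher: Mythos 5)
Your proof is correct, and its first half coincides with the paper's own argument: your key identity $\Pi^-\varphi^*d=0$ with $d=\Pi^-(\varphi^{-1})^*a$ is exactly the paper's reduction (there phrased as $\Pi^-\mathbf b^*\Pi^- v=0$ for $v=\mathbf b^{-*}\Pi^- v_-$), and both proofs then apply Lemma~\ref{l-low_freq} once to conclude that this low-frequency piece vanishes. The only divergence is the endgame. The paper observes that $d=0$ reads $\Pi^-\mathbf b^{-*}\Pi^- v_-=0$ and applies Lemma~\ref{l-low_freq} a \emph{second} time, with the orientation-preserving diffeomorphism $\mathbf b^{-1}$ in place of $\mathbf b$, to get $v_-=0$, after which $v_+=0$ follows from the kernel equations. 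You instead use $d=0$ to obtain $a=\varphi^*b$ and finish with the flux functional: $\mathbf F(w_a)\le 0\le \mathbf F(w_b)$, with equality throughout since $\mathbf F$ is invariant under orientation-preserving pullback and under additive constants, which kills $a$ and $b$ simultaneously. The two finishes are interchangeable and of comparable length; your flux argument is not really new machinery relative to the paper, since it is the same computation that drives the proof of Lemma~\ref{l-low_freq} itself and the uniqueness/unitarity step in the proof of Theorem~\ref{t-matrix}, but it has the small advantage of being self-contained and symmetric, dispatching both components at once rather than deducing $v_+=0$ from $v_-=0$ at the very end.
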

\begin{proof}
    Let $\begin{pmatrix} \bv_- \\ \bv_+\end{pmatrix} \in \Pi^- L^2(\mathbb S^1; T^* \mathbb S^1) \times \Pi^+ L^2(\mathbb S^1; T^* \mathbb S^1)$ be such that $\mathbf T \begin{pmatrix} \bv_- \\ \bv_+\end{pmatrix} = 0$. Then we must have
    \begin{equation*}
        \bv_- = \Pi^- \mathbf b^{*} \Pi^+ \mathbf b^{-*} \Pi^- \bv_-.
    \end{equation*}
    Let $\bv := \mathbf b^{-*} \Pi^- \bv_-$. Then $\bv_- = \Pi^- \mathbf b^{*} \Pi^+ \bv$, from which we see that
    \begin{equation}\label{e-v}
        \Pi^-\mathbf b^{*} \bv = \Pi^-\mathbf b^*( \mathbf b^{-*}\Pi^-\bv_- ) = \Pi^-\bv_-= \Pi^- \mathbf b^{*} \Pi^+ \bv.
    \end{equation}
    Note that the zeroth Fourier coefficient of $v$ vanishes since $\mathbf b^*$ is the pullback on 1-forms, so $\bv - \Pi^+ \bv = \Pi^- \bv$. Then it follows from~\eqref{e-v} that 
    \begin{equation*}
        \Pi^- \mathbf b^{*} \Pi^- \bv = 0.
    \end{equation*}
    By Lemma~\ref{l-low_freq}, it follows that $\Pi^- \bv = 0$. In particular, this means that $\Pi^- \mathbf b^{-*} \Pi^- \bv_- = 0$. Apply Lemma \ref{l-low_freq} again, and we see that $\bv_-=\Pi^-\bv_-=0$. 
    %Notice that $\mathbf b^{-*}$ is orientation reversing, hence wavefront calculus does not imply $\Pi^+\mathbf b^{-*}\Pi^-$ is a smoothing operator. However, because $v_-\in L^2(\mathbb S^1; T^*\mathbb S^1)$, we can still define $w$ in $H^1(\mathbb S^1)$ as in the proof of Lemma \ref{l-low_freq}, hence the rest of the argument applies here.
    %so applying Lemma~\ref{l-low_freq} again, $v_- = \Pi^- v_- = 0$. 
    A similar argument shows that $\bv_+ = 0$, so the nullspace is indeed trivial. 
\end{proof}

Let us now complete the proof of Theorem \ref{t-matrix}.
%\kevin{Would it be too confusing to use $\mathbf v^{\bullet}$ instead of $\mathbf q^\bullet$, $\bullet =i, o$? We would have $\mathbf v_{\pm}^\bullet$ as well as $\vl$ and $\vr$.}
\begin{proof}
    Suppose $\scd^i\in \mathring L^2(\mathbb T_{\lambda})$. We regard $\scd^i$ as an element in $\Pi^- L^2(\mathbb S^1;T^*\mathbb S^1)\times \Pi^+L^2(\mathbb S^1; T^*\mathbb S^1)$ through 
    \[\begin{split} 
    %\mathring L^2(\mathbb T_{\lambda}) \simeq & \Pi^-L^2(\jr; T^*\jr)\times \Pi^+L^2(\jl;T^*\jl) \simeq \Pi^- L^2(\mathbb S^1;T^*\mathbb S^1)\times \Pi^+L^2(\mathbb S^1;T^*\mathbb S^1) \\
    \scd^i \mapsto  ( \mathbf y^*( \Pi^-\scd^i \mathrm d x_1), \mathbf y^*( \Pi^+\scd^i \mathrm d x_1 ) )=:(\mathbf \scd^i_{\mathrm R}, \mathbf \scd^i_{\mathrm{L}}). 
    \end{split}\]
    Note then that
    $$
    \Pi^-\bscd_L^i=0,\quad \Pi^+\bscd_R^o=0.
    $$
    By Lemma \ref{l-ker}, we can define $(\mathbf \scd^o_{\mathrm{L}}, \mathbf \scd^o_{\mathrm{R}})$ and $\mathbf \scd^o$ such that 
    \begin{equation}\label{qlr}
        \begin{pmatrix} \mathbf \scd^o_{\mathrm L} \\ \mathbf \scd^o_{\mathrm R} \end{pmatrix} :=\mathbf T^{-1}\begin{pmatrix} \Pi^-\mathbf b^*\Pi^- & 0 \\ 0 & \Pi^+\mathbf b^{-*}\Pi^+ \end{pmatrix}\begin{pmatrix} \mathbf \scd^i_{\mathrm R} \\ \mathbf \scd^i_{\mathrm L} \end{pmatrix}, \ \mathbf \scd^o:=\mathbf \scd^o_{\mathrm L} + \mathbf \scd^o_{\mathrm R}.
    \end{equation}
    Define
    \[ \vl:=\mathbf \scd^i_{\mathrm L}+\mathbf \scd^o_{\mathrm L}, \ \vr:=\mathbf \scd^i_{\mathrm R} + \mathbf \scd^o_{\mathrm R}. \]
    Since $$\mathbf T  \begin{pmatrix} \mathbf \scd^o_{\mathrm L} \\ \mathbf \scd^o_{\mathrm R} \end{pmatrix}=\begin{pmatrix} \Pi^-(\bullet)\\ \Pi^+(\bullet) \end{pmatrix}, 
    $$
    examination of \eqref{Tdef} shows that 
    $$
    \Pi^+ \bscd^o_L=0,\quad \Pi^- \bscd^o_R=0,
    $$
    hence
    $$
    \begin{pmatrix} \Pi^-\vl \\ \Pi^+\vr \end{pmatrix}=\begin{pmatrix} \mathbf \scd^o_{\mathrm L} \\ \mathbf \scd^o_{\mathrm R} \end{pmatrix},\quad
\begin{pmatrix} \Pi^-\vr \\ \Pi^+\vl \end{pmatrix}
=\begin{pmatrix} \mathbf \scd^i_{\mathrm R} \\ \mathbf \scd^i_{\mathrm L} \end{pmatrix}.
    $$
Hence \eqref{qlr} implies that $\bv_{\mathrm L}$, $\bv_{\mathrm R}$ satisfy \eqref{scmatrix1} and hence also \eqref{scmatrix0}.  This yields the two relationships
$$
\Pi^-(\vl-\mathbf b^* \vr)=0,\quad \Pi^+(\vr-\mathbf b^{-*} \vl)=0.
$$
Setting $$\bv=\vl-\mathbf b^* \vr$$ and rewriting the latter equation gives
$$
\Pi^- \bv=0,\quad \Pi^+ \mathbf b^{-*} \bv=0.
$$
Since the first of these equations implies $\bv=\Pi^+ \bv$, we now have
$$
\Pi^+\mathbf b^{-*} \Pi^+\bv=0,
$$
hence by Lemma~\ref{l-low_freq}, $\Pi^+ \bv=0$ and so $\bv=0$, i.e., \eqref{homogpullback} holds.

    Now Lemma \ref{l-existence} yields the existence and uniqueness of $u\in \dot H^1_{\mathrm{loc}}$ such that $u$ solves the homogeneous equation \eqref{stationary} and $\vl$, $\vr$ as the Neumann data on $\jl$, $\jr$ respectively.

    Rewriting the map \eqref{qlr} from incoming to outgoing data $\bscd^o=\bscd_L^o+\bscd_R^o$,
    we consequently obtain an expression for the scattering matrix $\mathbf S$ in the form
    \begin{equation}\label{scattering2} \mathbf S = \begin{pmatrix} \mathrm{Id} & \mathrm{Id} \end{pmatrix}\mathbf T^{-1}\begin{pmatrix} \Pi^-\mathbf b^*\Pi^- \\ \Pi^+\mathbf b^{-*}\Pi^+ \end{pmatrix}. %\begin{pmatrix} \Pi^- \\ \Pi^+ \end{pmatrix}. 
    \end{equation}
    To see the microlocal structure of $\mathbf S$, note that by the calculus of wavefront sets on $\mathbb S^1,$ $\mathbf T$ is of the form $\mathrm{Id} + \mathbf{R}$ with $\mathbf{R}$ a (vector-valued) smoothing operator.  Since smoothing operators form an ideal, the inverse must then be of the same form.  Hence the form of the scattering matrix as well as the mapping property~\eqref{eq:S_map_prop} follows from the definition \eqref{scattering2}.

    Let us now show that $\mathbf S$ is unitary on $\mathring{H}^{-\frac12}$.
    For that we define 
    \[ w_{\bullet}:=\int_0^{\theta}\mathbf v_{\bullet} = \sum_{k\neq 0} \frac{\widehat{v}_{\bullet} (k) }{ik} (e^{ik\theta}-1) \in C^{\infty}(\mathbb T), \ \bullet=\mathrm{L}, \mathrm R. \]
    We compute the flux of $w_{\bullet}$
    \[ \mathbf F(w_{\bullet}) = \frac{1}{i}\int_{\mathbb S^1} \overline{w_{\bullet}} \mathrm d w_{\bullet} = 2\pi \sum_{k\neq 0}\frac{|\widehat{v}_{\bullet}(k)|^2}{k} = 2\pi \left( \|\Pi^+\mathbf v_{\bullet}\|^2_{\mathring{H}^{-\frac12}} - \|\Pi^-\mathbf v_{\bullet}\|^2_{\mathring{H}^{-\frac12}} \right)
    \]
    Since $\vl=\mathbf b^*\vr$, we must have $\mathbf F(w_{\mathrm L}) = \mathbf F(w_{\mathrm R})$. Thus,
    \[ \|\Pi^+\vl\|^2_{\mathring{H}^{-\frac12}} + \|\Pi^-\vr\|^2_{\mathring{H}^{-\frac12}} = \|\Pi^+\vr\|^2_{\mathring{H}^{-\frac12}} + \| \Pi^-\vl \|^2_{\mathring{H}^{-\frac12}}. \]
    This shows that $\mathbf S$ is unitary on $\mathring{H}^{-\frac12}$.
    \end{proof}

\section{Outgoing resolvent}

\subsection{Outgoing solutions}
Let us now construct outgoing solutions to the inhomogeneous problem \eqref{inhomogeneous} and thus obtain the proof of Theorem~\ref{t-outgoing}. We first have the following lemma about the boundary reduced equation~\eqref{e-transport}:
\begin{lemm}\label{l-outdata}
    Suppose $\mathbf g\in \mathring L^2(\mathbb S^1; T^*\mathbb S^1)$. Then there exist $\vl, \vr\in \mathring L^2(\mathbb S^1; T^*\mathbb S^1)$ such that \eqref{e-transport} holds and 
    \[ \Pi^+\vl + \Pi^-\vr=0. \]
\end{lemm}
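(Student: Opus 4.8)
The plan is to mirror the proof of Theorem~\ref{t-matrix}: reduce \eqref{e-transport} together with the outgoing constraint to a linear system governed by $\mathbf T$, solve it with $\mathbf T^{-1}$, and then verify the transport equation itself by a quantum flux identity.

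First I would unwind the outgoing condition. Since $\Pi^+\vl$ carries only strictly positive Fourier modes and $\Pi^-\vr$ only strictly negative ones, the requirement $\Pi^+\vl + \Pi^-\vr = 0$ is equivalent to $\Pi^+\vl = 0$ and $\Pi^-\vr = 0$; as $\vl,\vr \in \mathring L^2$ have vanishing mean, this is the same as asking $\vl \in \Pi^- L^2(\mathbb S^1; T^*\mathbb S^1)$ and $\vr \in \Pi^+ L^2(\mathbb S^1; T^*\mathbb S^1)$. I would then define the candidate pair by
\[ \begin{pmatrix}\vl \\ \vr\end{pmatrix} := \mathbf T^{-1}\begin{pmatrix}\Pi^-\mathbf g \\ -\Pi^+\mathbf b^{-*}\mathbf g\end{pmatrix}, \]
which makes sense because $\mathbf T = \Id + (\text{smoothing})$ has trivial kernel by Lemma~\ref{l-ker}, hence is invertible on $\Pi^- L^2 \times \Pi^+ L^2$. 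This choice is dictated by projecting \eqref{e-transport} onto negative modes (first row) and projecting $\mathbf b^{-*}$ applied to \eqref{e-transport} onto positive modes (second row).

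Next I would check that this pair actually solves \eqref{e-transport}. Set $\mathbf v := \vl - \mathbf b^*\vr - \mathbf g \in \mathring L^2(\mathbb S^1; T^*\mathbb S^1)$, and note $\int \mathbf v = 0$ since pullback by the orientation-preserving map $\mathbf b$ preserves the integral of a one-form. Reading off the two rows of $\mathbf T\begin{pmatrix}\vl\\\vr\end{pmatrix} = \begin{pmatrix}\Pi^-\mathbf g \\ -\Pi^+\mathbf b^{-*}\mathbf g\end{pmatrix}$ and using $\vl \in \Pi^-$, $\vr \in \Pi^+$, a direct computation yields $\Pi^-\mathbf v = 0$ and $\Pi^+\mathbf b^{-*}\mathbf v = 0$. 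The quantum flux argument from the proof of Theorem~\ref{t-matrix} then applies essentially verbatim: writing $\mathbf v = v\,\mathrm d\theta$, the relation $\Pi^-\mathbf v = 0$ forces $\mathbf F(v)\geq 0$, while $\Pi^+\mathbf b^{-*}\mathbf v = 0$ forces $\mathbf F(\mathbf b^{-*}v)\leq 0$; since the flux is invariant under pullback by $\mathbf b$, we obtain $\mathbf F(v) = 0$, whence $v = 0$ and $\mathbf v = 0$. This is precisely \eqref{e-transport}, and the outgoing condition $\Pi^+\vl + \Pi^-\vr = 0$ holds by construction.

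The main obstacle is exactly this last step: solving the $\mathbf T$-system only reproduces \eqref{e-transport} after projection onto $\Pi^-$ and onto $\Pi^+\mathbf b^{-*}$, and it is the flux identity—not linear algebra alone—that upgrades these projected relations to the full transport equation. An alternative, more hands-on route avoids $\mathbf T$ entirely: the constraint reduces to inverting the Toeplitz-type operator $\Pi^+\mathbf b^*\Pi^+$ on $\Pi^+ L^2$, which is injective by the $\Pi^+$-analogue of Lemma~\ref{l-low_freq} and Fredholm of index zero because $\Pi^+\mathbf b^{-*}\Pi^+$ serves as a two-sided parametrix modulo smoothing. I expect the flux argument above to be cleaner, however, since it reuses the machinery already assembled for the scattering matrix.
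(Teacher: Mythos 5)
Your proof is correct, but it follows a genuinely different route from the paper's. The paper never sets up an inhomogeneous $\mathbf T$-system: it invokes Theorem~\ref{t-matrix} as a black box with incoming datum $\mathbf g^i = -\Pi^+\mathbf g$, obtaining a pair $(\vl^0, \vr^0)$ with $\vl^0 - \mathbf b^*\vr^0 = 0$, $\Pi^+\vl^0 = -\Pi^+\mathbf g$, $\Pi^-\vr^0 = 0$, and then simply sets $(\vl, \vr) := (\vl^0 + \mathbf g,\, \vr^0)$. Adding $\mathbf g$ to the left datum of a homogeneous solution produces exactly the inhomogeneity in \eqref{e-transport}, and the incoming data cancel: $\Pi^+\vl + \Pi^-\vr = -\Pi^+\mathbf g + \Pi^+\mathbf g + 0 = 0$. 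That is the entire proof. You instead reopen the proof of Theorem~\ref{t-matrix}: you solve
\[
\mathbf T\begin{pmatrix}\vl \\ \vr\end{pmatrix} = \begin{pmatrix}\Pi^-\mathbf g \\ -\Pi^+\mathbf b^{-*}\mathbf g\end{pmatrix}
\]
via Lemma~\ref{l-ker} and the Fredholm property of $\Id + (\text{smoothing})$, and then rerun the quantum-flux argument to upgrade the projected identities $\Pi^-\mathbf v = 0$ and $\Pi^+\mathbf b^{-*}\mathbf v = 0$ (for $\mathbf v = \vl - \mathbf b^*\vr - \mathbf g$, which indeed has mean zero) to $\mathbf v = 0$; I checked these computations and they are right, and by uniqueness of outgoing data your pair coincides with the paper's. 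What each approach buys: the paper's reduction is two lines and avoids duplicating the flux machinery, at the cost of hiding the explicit linear system; yours is self-contained modulo the two lemmas, makes visible exactly where the outgoing constraint enters (it converts the underdetermined transport equation into a square $\mathbf T$-system), and correctly identifies that the flux identity, not linear algebra, is what restores the unprojected equation. Two minor remarks: your flux step inherits the paper's convention on the invariance of $\mathbf F$ under pullback, which (as in Lemma~\ref{l-low_freq}) is cleanest when $\mathbf F$ is evaluated on an antiderivative $w$ with $\mathrm d w = \mathbf v$ rather than on the coefficient function, a caveat affecting the paper's own argument equally; and the closing aside about inverting $\Pi^+\mathbf b^*\Pi^+$ as a Toeplitz-type operator is unnecessary for the lemma.
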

\begin{proof}
Using Theorem \ref{t-matrix}, we can construct unique $(\vl^0, \vr^0)$ such that $\vl^0-\mathbf b^*\vr^0=0$ and has incoming part given by 
\[(\Pi^+ \vl^0, \Pi^- \vr^0) = (0, -\Pi^+ \mathbf g).\]
Now one can check that $(\vl, \vr) := (\vl^0 + \mathbf g, \vr^0)$ satisfies the conditions of the lemma. 
\end{proof}

The construction of the outgoing solution operator $\mathcal R(\lambda)$ then follows from the above lemma, together with Lemma \ref{l-existence} and the uniqueness from Theorem~\ref{t-matrix}:
\begin{proof}[Proof of Theorem \ref{t-outgoing}]
For $f \in L^2_{\comp}$, define $g \in H^1_{\comp}(\partial \Omega_\uparrow)$ by~\eqref{e-g}, and let $\mathbf g \in \mathring L^2(\mathbb S^1; T^*\mathbb S^1)$ be as in~\eqref{e-vpm}. With such $\mathbf g$, we can construct $\vl, \vr\in \mathring L^2(\mathbb S^1; T^*\mathbb S^1)$ as in Lemma~\ref{l-outdata}. Therefore, it follows from Lemma~\ref{l-existence} that there exists $u \in \dot H^1_{\loc}(\Omega)$ to \eqref{inhomogeneous} with $\vl$ and $\vr$ as the Neumann data on the left and right fundamental intervals respectively. It follows from Lemma~\ref{l-outdata} that 
\begin{equation}\label{e-outproj}
    \Pi^+(\partial_{x_2}u|_{\mathcal J_{\mathrm L}})=\Pi^-( \partial_{x_2} u|_{\mathcal J_{\mathrm R}} )=0.
\end{equation}
Furthermore, it follows from Theorem~\ref{t-matrix} that a solution to~\eqref{inhomogeneous} satisfying~\eqref{e-outproj} is unique. Setting
\[\mathcal R(\lambda)f := u,\]
we have constructed the desired outgoing solution operator. 
\end{proof}

\bibliography{lap}
\bibliographystyle{alpha}

\end{document}